\documentclass[11pt,reqno]{amsart}
\usepackage[utf8]{inputenc}
 
\usepackage{amssymb, amsmath, amsthm}

\usepackage[colorlinks]{hyperref}
\usepackage[nameinlink]{cleveref}
\hypersetup{
  linkcolor=[rgb]{0.3,0.3,0.6},
  citecolor=[rgb]{0.2, 0.6, 0.2},
  urlcolor=[rgb]{0.6, 0.2, 0.2}
}

\usepackage{verbatim}
\usepackage{amscd}    
\usepackage[all]{xy}  
\usepackage{youngtab} 
\usepackage{young}    
\usepackage{ytableau}
\usepackage{tikz-cd, tikz}
\usepackage{mathrsfs}
\usepackage{cases}
\usepackage{array}
\usepackage{cellspace}
\usepackage{calligra,mathrsfs}
\usepackage{bm}
\usepackage{graphicx}
\usepackage{rank-2-roots}
\usepackage{float}

\usepackage[style=alphabetic]{biblatex}
\usepackage{enumitem}
\setenumerate{label=\textup{(\roman*)}}

\usepackage[top=3.5cm,bottom=3.5cm,left=3cm,right=3cm]{geometry}

\DeclareMathOperator{\ShHom}{\mathscr{H}\text{\kern -3pt {\calligra\large om}}\,}

\def\initial{{\rm in}}
\def\Ker{{\mathrm{Ker}}}

\newtheorem{theorem}{Theorem}[section]
\newtheorem*{theorem*}{Theorem}
\newtheorem*{problem*}{Problem}
\newtheorem{lemma}[theorem]{Lemma}
\newtheorem{conjecture}[theorem]{Conjecture}

\newtheorem*{corollary*}{Corollary}

\newtheorem*{main-thm*}{Main Theorem}

\theoremstyle{definition}

\newtheorem*{definition*}{Definition}

\newtheorem{example}[theorem]{Example}

\theoremstyle{remark}
\newtheorem{remark}[theorem]{Remark}
\newtheorem*{remark*}{Remark}

\numberwithin{equation}{section}

\tikzset{
  treenode/.style = {align=center, inner sep=0pt, text centered,solid,thin,
    font=\sffamily},
  arn_n/.style = {treenode, circle, white, font=\sffamily\bfseries, draw=black,
    fill=black, text width=.5em},
  arn_nl/.style = {treenode, circle, white, font=\sffamily\bfseries, draw=black,
    fill=black, text width=1.5em},  
  arn_r/.style = {treenode, circle, red, draw=red, 
    text width=.5em, very thick},
  arn_v/.style = {treenode, circle, black, font=\sffamily\bfseries, draw=black, text width=1.2em},
  arn_x/.style = {treenode, rectangle, draw=black,
    minimum width=.5em, minimum height=0.5em},
  dott/.style={edge from parent/.style={dotted, very thick,circle,draw}},
  emph/.style={edge from parent/.style={dashed, very thick,circle,draw}},
  norm/.style={edge from parent/.style={solid,thin,circle,draw}}
}
\makeatletter
\def\labelbox#1{%
  \hbox{%
    \setbox\z@=\hbox{$\m@th\labelstyle{\,#1\,}$}%
    \setbox\tw@=\hbox{$\m@th\labelstyle\,$}%
    \dimen@=\ht\z@ \advance\dimen@ by \wd\tw@ \ht\z@=\dimen@
    \dimen@=\dp\z@ \advance\dimen@ by \wd\tw@ \dp\z@=\dimen@
    \box\z@
  }%
}
\makeatother
\begin{document}

\title[Join-meet binomial algebras of distributive lattices]{Join-meet binomial algebras of distributive lattices}

\author{Barbara Betti}
\address{(Barbara Betti) Otto-von-Guericke-University Magdeburg, Institut für Algebra und Geometrie, Universitätsplatz 2, Magdeburg, Germany.}
\email{barbara.betti@ovgu.de}

\author{Takayuki Hibi}
\address{(Takayuki Hibi) Department of Pure and Applied Mathematics, Graduate School
of Information Science and Technology, Osaka University, Suita, Osaka 565–0871,
Japan.}
\email{hibi@math.sci.osaka-u.ac.jp}

\subjclass[2020]{06A11, 13H10, 13P10, 14M15}
\date{}
\thanks{The research for this paper was initiated while the second author stayed at the Max Planck Institute for Mathematics in the Sciences, Leipzig, August 15 – September 8, 2025.}

\keywords{distributive lattice, binomial, Pl\"ucker quadric, polynomial ring, algebra with straightening laws}

\begin{abstract} We investigate the defining ideal of the algebra over a field generated by the join-meet binomials coming from a finite distributive lattice.  In the frame of algebras with straightening laws, the problem when the defining ideal is generated by quadrics is studied.  
\end{abstract}

\maketitle

\section*{Introduction}
\label{sec: intro}
Let $L$ be a finite lattice on $n$ elements $x_1,\dots, x_n$ and $S=K[x_1,\ldots,x_n]$ be the polynomial ring in $n$ variables over a field $K$. We are interested in the quadratic binomials 
\[ 
f_{ij}= x_ix_j - (x_i\vee x_j)(x_i\wedge x_j), \quad 1\leq i<j\leq n, 
\] 
called {\em join-meet binomials} of $L$.  Clearly, $f_{ij}$ is not zero if and only if $x_i$ and $x_j$ are incomparable in $L$.  The ideal $I_L=(f_{ij}:  1\leq i<j\leq n)$ and its quotient ring $S/I_L$ was introduced in \cite{hibi1987}.  In particular, by virtue of the classical results of Birkhoff \cite[Theorem 9.1.7]{HHgtm260} and Dedekind \cite[Chapter 3, Exercise 30]{EC1}, it is shown that $I_L$ is a prime ideal if and only if $L$ is a distributive lattice and that, when $L$ is a distributive lattice, $S/I_L$ is normal and Cohen--Macaulay.  Both the ideal $I_L$ and the quotient ring $S/I_L$ of a distributive lattice have been widely investigated thereafter. 

In this paper we adopt a different perspective and focus on the finitely generated $K$-algebra $$\mathcal{R}_K(L)=K[f_{ij}:   1\leq i<j\leq n, f_{ij} \neq 0] \subset S$$ of a distributive lattice $L$. We do not assume any restriction on the field $K$. We refer to $\mathcal{R}_K(L)$ as the \emph{join-meet algebra} of $L$. This class of algebras is of relevant interest as it contains important examples in commutative algebra and algebraic geometry such as the homogeneous coordinate rings of the Grassmannian ${\rm Gr}_K(2,m)$, for every $m\geq 4$. The defining ideal of $K[{\rm Gr}_K(2,m)]$ is generated by the well-known Plücker quadrics. In general, the defining ideal of $\mathcal R_K(L)$ contains quadrics which are independent of Pl\"ucker quadrics. For example, the join-meet algebra $\mathcal R_K(B_3)$  of the boolean lattice $B_3$ of rank $3$ (Figure \ref{fig: boolean lattice of rank 3}) has defining ideal generated by the following two quadrics:
\[ f_{3,6}f_{4,5} + f_{2,7}f_{4,6} -f_{3,5}f_{4,6} - f_{2,6}f_{4,7} \ =  \ f_{2,7}f_{3,5} - f_{2,7}f_{4,6} + f_{2,6}f_{4,7} - f_{2,3}f_{5,7}  \ = \ 0.\]

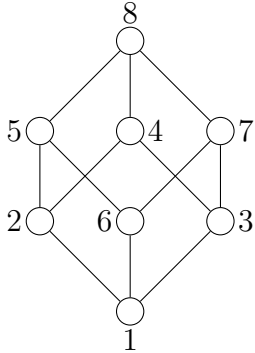
\begin{figure}
\centering
\begin{tikzpicture}[scale=1.2]
\node[draw,shape=circle] (1) at (1,-1) {};
\node[draw,shape=circle] (2) at (0,0) {};
\node[draw,shape=circle] (3) at (2,0) {};
\node[draw,shape=circle] (4) at (1,1) {};
\node[draw,shape=circle] (5) at (0,1) {};
\node[draw,shape=circle] (6) at (1,0) {};
\node[draw,shape=circle] (7) at (2,1) {};
\node[draw,shape=circle] (8) at (1,2) {};

\draw (1)node[below=1mm]{\large{1}}--(2)node[left=1mm]{\large{2}}--(5)node[left=1mm]{\large{5}}--(8)node[above=1mm]{\large{8}}--(7)node[right=1mm]{\large{7}}--(3)node[right=1mm]{\large{3}}--(1);
\draw (8)--(4)node[right=1mm]{\large{4}}--(2);
\draw (4)--(3);
\draw (1)--(6)node[left=1mm]{\large{6}}--(5);
\draw (6)--(7);
\end{tikzpicture}
\caption{Boolean lattice $B_3$ of rank $3$.}
\label{fig: boolean lattice of rank 3}
\end{figure}

A relevant result on this class of algebras is given in \cite{higashitani2026khovanskiibases}, where the authors characterize distributive lattices $L$ for which the set of generators $\mathcal{B}_L=\{ f_{ij}: 1\leq i<j\leq n, f_{ij} \neq 0\}$ is a Khovanskii/Sagbi basis with respect to a compatible term order. 

A final goal of our project is to classify those distributive lattices $L$ for which the defining ideal of $\mathcal{R}_K(L)$ is generated by quadrics. We work within the framework of algebras with straightening laws (ASL) \cite{Eisenbud1980}. As a first step in this direction, we classify distributive lattices $L$ whose join-meet algebra $\mathcal{R}_K(L)$ is a
polynomial ring (Theorem \ref{polynomial_ring}) and show that the defining ideal of $\mathcal{R}_K(L)$ of a thin distributive lattice is generated by quadrics (Theorem~\ref{highlight}). 

In the Appendix, assuming the Pl\"ucker relation \cite[p.~73]{bruns2022determinants} is known, we give a short proof of the classical fact (Hodge \cite{Ho} and Doubilet--Rota--Stein \cite{DRS}) that the homogeneous coordinate ring $K[x_iy_j - x_jy_i : 1 \leq i < j \leq n]$ of the Grassmann variety ${\rm Gr}_K(d,n)$ parameterizing $d$-dimensional subspaces of $K^n$ is an ASL on a distributive lattice over $K$.

\section{Partially ordered sets, distributive lattices and Gr\"obner bases}
    A partially ordered set is called a {\em poset}.  A {\em chain} of a finite poset $P$ is a totally ordered subset of $P$.  The {\em length} of a chain $C$ is $|C|-1$, where $|C|$ is the cardinality of $C$.  The {\em rank} of $P$ is the biggest length of chains of $P$. Let ${\rm rank}(P)$ denote the rank of $P$.  A finite poset is called {\em pure} if all of its maximal chains have the same length.

    A {\em finite lattice} is a finite poset whose any two elements $a,b$ have a unique greatest lower bound $a \wedge b$ and a unique least upper bound $a \vee b$, called, respectively, the {\em meet} and the {\em join} of $a$ and $b$. A finite lattice is {\em distributive} if meet and join operations distribute over each other.

    A subset $I$ of a finite poset $P$ is called a {\em poset ideal} of $P$ if it is downward closed, that is, for every $a \in I$ and $b\in P$ satisfying $b \leq a$ in $P$, we have $b \in I$.  In particular, $\emptyset$ and $P$ are poset ideals of $P$.  Let ${\mathcal J}(P)$ denote the finite poset consisting of all poset ideals of $P$, ordered by inclusion.  Then ${\mathcal J}(P)$ is a distributive lattice of rank $|P|$.   

    Let $L$ be a finite distributive lattice and $0_L$ (resp. $1_L$) be its unique minimal (resp. maximal) element.  We say that $x \in L$ is an {\em apex} of $L$ if $x$ is comparable to all $a \in L$. In particular, both $0_L$ and $1_L$ are apexes of L.  A finite distributive lattice $L$ is called {\em simple} if there is no apex of $L$ except for $0_L$ and $1_L$.  An element $0_L \neq a \in L$ is called {\em join-irreducible} if $a = b \vee c$, then either $a=b$ or $a=c$.  Let $P_L$ be the subposet of $L$ consisting of all join-irreducible elements of $L$.  Birkhoff's fundamental structure theorem for finite distributive lattices \cite[Theorem 9.1.7]{HHgtm260} guarantees that $L$ and ${\mathcal J}(P_L)$ are isomorphic as finite lattices.  We say that $L$ is {\em planar} if among any three join-irreducible elements $a,b,c\in L$, two of them are comparable in $L$.  The {\em{rank}} of $a \in L$ is the maximal length of chains of the form $a_{i_0} < a_{i_1} < \cdots < a_{i_r} = a$.  Let ${\rm rank}_L(a)$ denote the rank of $a \in L$.  Let $d = {\rm rank}(L) = {\rm rank}_L(1_L)$ and write $\rho_L(i)$ for the number of elements $a \in L$ with ${\rm rank}_L(a) = i$, where $0 \leq i \leq d$.  In particular, $\rho_L(0) = \rho_L(d)=1$.  Let $\theta(L) = \max\{\rho_L(i) : 1 \leq i < d \}$.  A finite simple distributive lattice $L$ is called {\em thin} \cite[p.~3]{HerzogHibi2023} (or a {\em generalized snake poset} \cite{BBHSVVY2022}) if $\theta(L) = 2$, i.e., $\rho_L(i) = 2$ for $1 \leq i < d$. It follows that a thin distributive lattice is planar.  

    Let $n > 0$ be an integer and let $D_n$ be the set of all divisors of $n$, ordered by divisibility. We call $D_n$ the {\em divisor lattice} of $n$. Every divisor lattice is distributive. A divisor lattice $D_n$ is called {\em Boolean} if $n$ is squarefree. A divisor lattice $D_n$ is thin if and only if $n = pq^s$, where $p $ and $ q$ are distinct primes and $s \geq 1$.

    The following lemma on Gröbner bases will be used in the next section. We refer the reader to \cite[Chapter 2]{HHgtm260} for fundamental materials on Gr\"obner bases. 

\begin{lemma}
    \label{lemma: grobner bases}
    Let $f_1, \ldots, f_s$ be homogeneous polynomials of $S=K[x_1, \ldots, x_n]$ of the same degree $>0$ and let $<$ be a monomial order on $S$. Consider the subrings $K[f_1,\ldots,f_s]$ and $K[{\rm in}_<(f_1), \ldots, {\rm in}_<(f_s)]$ of $S$.  Let $A=K[y_1, \ldots, y_s]$ be the polynomial ring in $s$ variables over $K$.  We define the surjective ring homomorphisms $\varphi:A \to K[f_1, \ldots, f_s]$ and $\psi:A \to K[{\rm in}_<(f_1), \ldots, {\rm in}_<(f_s)]$ by setting $\varphi(y_i)=f_i$ and $\psi(y_i) = {\rm in}_<(f_i)$.  Let $I={\rm Ker}(\varphi)$ denote the defining ideal of $K[f_1,\ldots,f_s]$ and $J={\rm Ker}(\psi)$ the defining ideal of $K[{\rm in}_<(f_1), \ldots, {\rm in}_<(f_s)]$.  Let $<^\sharp$ be a monomial order on $A$ and $G'=\{g_1,\ldots,g_t\}$ a Gr\"obner basis of $J$ with respect to $<^\sharp$. Suppose that ${\rm in}_{<^\sharp}(g_j) \in {\rm in}_{<^\sharp}(I)$ for each $1 \leq j \leq t$ and choose $h_j \in I$ with ${\rm in}_{<^\sharp}(g_j) = {\rm in}_{<^\sharp}(h_j)$. Then $G :=\{h_1, \ldots, h_t\}$ is a Gr\"obner basis of $I$ with respect to $<^\sharp$.
\end{lemma}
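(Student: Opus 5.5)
The idea is to show that ${\rm in}_{<^\sharp}(J) \subseteq {\rm in}_{<^\sharp}(I)$ and that the two initial ideals have the same Hilbert function. Then they coincide, and the $h_j$ generate ${\rm in}_{<^\sharp}(I)$.

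\textbf{Inclusion.} Since $G'$ is a Gr\"obner basis of $J$, the ideal ${\rm in}_{<^\sharp}(J)$ is generated by the monomials ${\rm in}_{<^\sharp}(g_j)$. By hypothesis each of these equals ${\rm in}_{<^\sharp}(h_j)$ with $h_j\in I$. Hence
\[ {\rm in}_{<^\sharp}(J) = ({\rm in}_{<^\sharp}(h_1),\ldots,{\rm in}_{<^\sharp}(h_t)) \subseteq {\rm in}_{<^\sharp}(I). \]

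\textbf{Dimension count.} Grade $A$ by $\deg y_i=1$. Since all $f_i$ have the same degree $e>0$, the maps $\varphi$ and $\psi$ send $A_k$ into $S_{ke}$. So $I$ and $J$ are homogeneous, and $\dim_K A_k/I_k=\dim_K K[f_1,\ldots,f_s]_{ke}$, where the subscript means the span of products of $k$ generators. Similarly $\dim_K A_k/J_k=\dim_K K[{\rm in}_<(f_1),\ldots,{\rm in}_<(f_s)]_{ke}$.

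The main step is the inequality
\[ \dim_K K[f_1,\ldots,f_s]_{ke} \;\ge\; \dim_K K[{\rm in}_<(f_i)]_{ke}. \]
The right-hand side is the number of distinct monomials ${\rm in}_<(f)^{\mathbf a}=\prod {\rm in}_<(f_i)^{a_i}$ with $|\mathbf a|=k$. For each such monomial $m$, pick $\mathbf a$ with ${\rm in}_<(f)^{\mathbf a}=m$. Then $f^{\mathbf a}$ has initial term $m$, because ${\rm in}_<$ is multiplicative. Polynomials with pairwise distinct initial monomials are linearly independent, which gives the inequality.

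Consequently $\dim_K (A/I)_k\ge \dim_K (A/J)_k$. Using Macaulay's theorem that $\dim_K(A/\mathfrak a)_k=\dim_K(A/{\rm in}(\mathfrak a))_k$ for homogeneous $\mathfrak a$, we get
\[ \dim_K({\rm in}_{<^\sharp}(I))_k\le \dim_K({\rm in}_{<^\sharp}(J))_k \quad\text{for every } k. \]

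\textbf{Conclusion.} Both ideals are monomial, so they are spanned degreewise by their monomials. Together with the inclusion above, the dimension bound forces ${\rm in}_{<^\sharp}(J)={\rm in}_{<^\sharp}(I)$ in every degree. Thus ${\rm in}_{<^\sharp}(I)$ is generated by the ${\rm in}_{<^\sharp}(h_j)$. Since each $h_j\in I$, the set $G$ is a Gr\"obner basis of $I$.

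The only delicate point is the inequality in the dimension count, namely that distinct initial monomials yield linearly independent products $f^{\mathbf a}$. The equal-degree hypothesis is what makes $I$ and $J$ homogeneous, so that the degreewise comparison is legitimate.
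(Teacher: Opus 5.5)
Your proof is correct. It rests on the same key fact as the paper's proof: ${\rm in}_<$ is multiplicative, so ${\rm in}_<(\varphi(u))=\psi(u)$ for every monomial $u\in A$, and products with distinct $\psi$-images are therefore linearly independent. What differs is how that fact is packaged.

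The paper lets $\mathcal H$ be the set of monomials outside $({\rm in}_{<^\sharp}(h_1),\ldots,{\rm in}_{<^\sharp}(h_t))={\rm in}_{<^\sharp}(J)$. It observes that the $\psi(u)$, $u\in\mathcal H$, are pairwise distinct monomials, and concludes directly that $\{\varphi(u):u\in\mathcal H\}$ is linearly independent. Since the monomials outside ${\rm in}_{<^\sharp}(I)$ already form a basis of $A/I$ and are contained in $\mathcal H$, this forces ${\rm in}_{<^\sharp}(I)={\rm in}_{<^\sharp}(J)$. The paper compresses this last step into ``it suffices to prove.''

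You instead turn the same fact into the degreewise inequality $\dim_K(A/I)_k\ge\dim_K(A/J)_k$. You then close by counting, using the inclusion ${\rm in}_{<^\sharp}(J)\subseteq{\rm in}_{<^\sharp}(I)$ together with Macaulay's theorem.

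Your version makes the final step fully explicit. However, it genuinely needs the equal-degree hypothesis, which gives homogeneous $I$ and $J$ with finite-dimensional graded pieces to compare. The paper's basis argument never uses the grading, so it proves the lemma with no homogeneity or degree assumption on $f_1,\ldots,f_s$.
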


\begin{proof}
    Let $\mathcal H$ denote the set of those monomials $u \in A$ with $u \not\in ({\rm in}_{<^\sharp}(h_1), \ldots, {\rm in}_{<^\sharp}(h_t))$.  It suffices to prove that $\{\varphi(u) : u \in \mathcal H\}$ is a linearly independent set. Since    
    \[
    ({\rm in}_{<^\sharp}(h_1), \ldots, {\rm in}_{<^\sharp}(h_t))=({\rm in}_{<^\sharp}(g_1), \ldots, {\rm in}_{<^\sharp}(g_t))={\rm in}_{<^\sharp}(J),
    \]
    it follows that $\{\psi(u) : u \in \mathcal H\}$ is linearly independent.  Let $u_1, \ldots, u_\delta$ be monomials, where $u_\xi \neq u_{\xi'}$ if $\xi \neq \xi'$, belonging to ${\mathcal H}$ with $\delta >1$.  Thus $\psi(u_\xi) \neq \psi(u_{\xi'})$ if $\xi \neq \xi'$.  Suppose that 
\begin{eqnarray*}
    \label{LINEARLY}
\sum_{\xi=1}^\delta c_\xi\varphi(u_\xi) = 
\sum_{\xi=1}^\delta c_\xi f_1^{\alpha^{(1)}_{\xi}}\cdots f_s^{\alpha^{(s)}_{\xi}} =
0, \quad 0 \neq c_\xi \in K.
\end{eqnarray*}
We observe that 
\[
{\rm in}_<(f_1^{\alpha^{(1)}_{\xi}}\cdots f_s^{\alpha^{(s)}_{\xi}}) =
({\rm in}_<(f_1))^{\alpha^{(1)}_{\xi}}\cdots ({\rm in}_<(f_s))^{\alpha^{(s)}_{\xi}}=\psi(u_\xi)
\]
and, since $\psi(u_\xi) \neq \psi(u_{\xi'})$ whenever $\xi \neq \xi'$, there exists a unique $1 \leq \xi_0 \leq \delta$ for which
\begin{eqnarray*}
    \label{LINEARLYagain}
0={\rm in}_{<}\left(\sum_{\xi=1}^\delta c_\xi\varphi(u_\xi)\right) = c_{\xi_0} ({\rm in}_<(f_1))^{\alpha^{(1)}_{\xi_0}}\cdots ({\rm in}_<(f_s))^{\alpha^{(s)}_{\xi_0}}
= c_{\xi_0} \psi(u_{\xi_0}).
\end{eqnarray*}
    This is a contradiction, hence $G$ is a Gröbner basis of $I$ with respect to $<^\sharp$.
\end{proof}

\begin{remark}
    In Lemma \ref{lemma: grobner bases}, if ${\rm in}_<(f_1), \ldots, {\rm in}_<(f_s)$ are algebraically independent, then $f_1, \ldots, f_s$ are algebraically independent.  
\end{remark}

\section{Algebras with straightening laws}
    Let $R = \bigoplus_{n=0}^{\infty} R_n$ be a noetherian graded algebra over $K$.  Let $P$ be a finite poset and suppose that an injection $\varphi: P \hookrightarrow \bigcup_{n=1}^{\infty} R_n$ for which the $K$-algebra $R$ is generated by $\varphi(P)$ over $K$ is given.  A {\em standard monomial} is a homogeneous element of $R$ of the form $\varphi(\gamma_1) \varphi(\gamma_2)\cdots \varphi(\gamma_n)$, where $\gamma_1 \leq \gamma_2 \leq \cdots \leq \gamma_n$ in $P$.  We call $R$ an {\em algebras with straightening laws} \cite{Eisenbud1980} on $P$ over $K$ if the following conditions are satisfied:
\begin{itemize}
\item[]
(ASL\,-1)
The set of standard monomials is a basis of $R$ over $K$;
\item[]
(ASL\,-2)
If $\alpha$ and $\beta$ in $P$ are incomparable and
\begin{eqnarray}
\label{ASL}
\, \, \, \, \, \, \, \, \, \, \, \, \, \, \, \, \, \, \, \, 
\varphi(\alpha)\varphi(\beta) 
= \sum_{i} r_i\,\varphi(\gamma_{i_1})\varphi(\gamma_{i_2}) \cdots , \, \, \, 0 \neq r_i \in K, \, \, \, \gamma_{i_1}\leq \gamma_{i_2} \leq \cdots 
\end{eqnarray}
    is the unique expression of $\varphi(\alpha)\varphi(\beta) \in R$ as linear combination of distinct standard monomials guaranteed by (ASL\,-1), then $\gamma_{i_1} \leq \alpha, \beta$ for every $i$. 
\end{itemize}
    The right-hand side of the relation in (ASL\,-2) is allowed to be the empty sum $(=0)$.  We abbreviate an algebra with straightening laws as ASL.  The relations in (ASL\,-2) are called the {\em straightening relations} for $R$.  

    Let $A=K[x_{\alpha} : \alpha \in P]$ denote the polynomial ring in $|P|$ variables over $K$ and define the surjection $\pi: A \to R$ by setting $\pi(x_\alpha) = \varphi(\alpha)$.  The defining ideal $I_R$ of $R = \bigoplus_{n=0}^{\infty} R_n$ is the kernel $\Ker(\pi)$ of $\pi$.  If $\alpha$ and $\beta$ in $P$ are incomparable, then we introduce the polynomial of $A$ 
\[
    f_{\alpha,\beta}:=x_\alpha x_\beta - \sum_{i} r_i\,x_{\gamma_{i_1}}x_{\gamma_{i_2}} \cdots
\]
    arising from (ASL-2) and belonging to $I_R$.  Let $G_R$ denote the set of those $f_{\alpha,\beta}$ for which $\alpha$ and $\beta$ are incomparable in $P$.  Let $<_{\rm rev}$ denote the reverse lexicographic order on $A$ induced by an ordering of the variables for which $x_{\alpha} <_{\rm rev} x_{\beta}$ if $\alpha < \beta$ in $P$.  It follows that $\initial_{<_{\rm rev} }(f_{\alpha,\beta}) = x_\alpha x_\beta$ and $G_R$ is a Gr\"obner basis of $I_R$ with respect to $<_{\rm rev}$.  Furthermore, $\dim R = {\rm rank}(P)+1$.

    In the definition of ASL, if we require only (ASL\,-2), then we call $R$ a {\em weakly} ASL on $P$ over $K$.  It would be of interest to find a criterion for a weakly ASL on $P$ over $K$ to be an ASL on $P$ over $K$.  One possible and standard way is to compute Hilbert functions.  On the other hand, by virtue of Lemma \ref{lemma: grobner bases}, we have the following result. 

\begin{lemma}
    \label{weakly}
    Let $f_1, \ldots, f_s$ be homogeneous polynomials of $S=K[x_1, \ldots, x_n]$ of the same degree $>0$ and let $<$ be a monomial order on $S$. Consider the subrings $K[f_1,\ldots,f_s]$ and $K[{\rm in}_<(f_1), \ldots, {\rm in}_<(f_s)]$ of $S$.  Let $P=\{p_1, \ldots, p_s\}$ be a finite poset and define the injection $\pi:P \to K[f_1, \ldots, f_s]$ and $\pi':P \to K[{\rm in}_<(f_1), \ldots, {\rm in}_<(f_s)]$ by setting $\pi(p_i)=f_i$ and $\pi'(p_i) = {\rm in}_<(f_i)$.  Suppose that $K[{\rm in}_<(f_1), \ldots, {\rm in}_<(f_s)]$ is an ASL on $P$ over $K$ and that $K[f_1,\ldots,f_s]$ is a weakly ASL on $P$ over $K$.  Then $K[f_1,\ldots,f_s]$ is an ASL on $P$ over $K$. 
\end{lemma}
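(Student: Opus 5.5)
Let $A=K[y_1,\ldots,y_s]$, with $\varphi:A\to K[f_1,\ldots,f_s]$, $y_i\mapsto f_i$, and $\psi:A\to K[{\rm in}_<(f_1),\ldots,{\rm in}_<(f_s)]$, $y_i\mapsto {\rm in}_<(f_i)$, as in Lemma~\ref{lemma: grobner bases}. Set $I={\rm Ker}(\varphi)$ and $J={\rm Ker}(\psi)$. We identify $y_i$ with $x_{p_i}$ and choose the reverse lexicographic order $<^\sharp=<_{\rm rev}$ on $A$, using an ordering of the variables with $y_i<_{\rm rev} y_j$ whenever $p_i<p_j$ in $P$. The plan is to apply Lemma~\ref{lemma: grobner bases} to show that the straightening relations of $K[f_1,\ldots,f_s]$ form a Gr\"obner basis of $I$. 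Then (ASL\,-1) follows, and (ASL\,-2) is given by hypothesis.

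First, since $K[{\rm in}_<(f_1),\ldots,{\rm in}_<(f_s)]$ is an ASL on $P$, the discussion preceding the lemma shows that $G_{R'}=\{g_{\alpha,\beta}\}$ is a Gr\"obner basis of $J$ with respect to $<_{\rm rev}$. Here $R'$ denotes this ASL, $\alpha,\beta$ run over incomparable pairs of $P$, and ${\rm in}_{<_{\rm rev}}(g_{\alpha,\beta})=y_\alpha y_\beta$. Second, since $K[f_1,\ldots,f_s]$ is a weakly ASL, for every incomparable pair $\alpha,\beta$ we have a relation $h_{\alpha,\beta}=y_\alpha y_\beta-\sum_i r_i\, y_{\gamma_{i_1}}y_{\gamma_{i_2}}\cdots\in I$ with $\gamma_{i_1}\le\alpha,\beta$. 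The point to check is that ${\rm in}_{<_{\rm rev}}(h_{\alpha,\beta})=y_\alpha y_\beta$.

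This check is the standard revlex argument. Every monomial in the sum on the right is standard, so it differs from $y_\alpha y_\beta$. Each such monomial contains $y_{\gamma_{i_1}}$ with $\gamma_{i_1}\le\alpha,\beta$, and in fact $\gamma_{i_1}<\alpha$ or $\gamma_{i_1}<\beta$, since $\gamma_{i_1}$ cannot equal both of the incomparable elements $\alpha,\beta$. The monomials may have different degrees in the $y$'s, but all $f_i$ have the same degree, so $I$ is homogeneous in the standard grading of $A$. The relations therefore live in the degree-two component, and every monomial on the right has degree $2$.

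Suppose $\gamma_{i_1}<\alpha$ strictly. Then $y_{\gamma_{i_1}}$ is smaller than both $y_\alpha$ and $y_\beta$, or equals one of them. If $\gamma_{i_1}=\beta$, then $\beta<\alpha$, contradicting incomparability. So $y_{\gamma_{i_1}}$ is smaller than both, and it divides the monomial on the right but not $y_\alpha y_\beta$. We need a linear extension ordering such that $y_{\gamma_{i_1}}$ is the smallest variable appearing in either monomial. This holds because the other factor $y_{\gamma_{i_2}}$ satisfies $\gamma_{i_2}\ge\gamma_{i_1}$. Hence in revlex the right-hand monomial is smaller, and ${\rm in}_{<_{\rm rev}}(h_{\alpha,\beta})=y_\alpha y_\beta={\rm in}_{<_{\rm rev}}(g_{\alpha,\beta})\in{\rm in}_{<_{\rm rev}}(I)$.

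Lemma~\ref{lemma: grobner bases} now gives that $\{h_{\alpha,\beta}\}$ is a Gr\"obner basis of $I$ with respect to $<_{\rm rev}$, with initial ideal generated by the $y_\alpha y_\beta$ for incomparable $\alpha,\beta$. The monomials outside this initial ideal are exactly the standard monomials $y_{\gamma_1}\cdots y_{\gamma_m}$ with $\gamma_1\le\cdots\le\gamma_m$. By Macaulay's theorem their images form a $K$-basis of $A/I\cong K[f_1,\ldots,f_s]$, which is (ASL\,-1). Together with the hypothesised (ASL\,-2), $K[f_1,\ldots,f_s]$ is an ASL on $P$.

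The main obstacle I expect is the initial-term verification, in particular keeping track of the degrees. Straightening relations of the weakly ASL could a priori involve products of length other than $2$. I would handle this by the homogeneity remark above: since all $f_i$ have the same degree, every monomial in a relation must have $y$-degree $2$.
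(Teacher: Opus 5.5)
Your proposal is correct and is essentially the paper's proof: the straightening relations of $K[{\rm in}_<(f_1),\ldots,{\rm in}_<(f_s)]$ form a Gr\"obner basis of $J$ for $<_{\rm rev}$, and the weakly-ASL relations of $K[f_1,\ldots,f_s]$ lie in $I$ with the same initial terms $y_\alpha y_\beta$. Lemma~\ref{lemma: grobner bases} then makes the standard monomials a basis. You also spell out two points the paper leaves implicit: the revlex initial-term check (using $\gamma_{i_1}<\alpha,\beta$ strictly), and the reduction to degree-two relations via homogeneity of $I$, which strictly means replacing each relation by its degree-two component, since that component still lies in $I$.
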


\begin{proof}
    Let $G'=\{ f_{\alpha,\beta}' : \alpha,\beta \text{ uncomparable} \}$ be the Gröbner basis of $\Ker(\pi')$ defined via the straightening relations of $K[{\rm in}_<(f_1), \ldots, {\rm in}_<(f_s)]$. Then the polynomials $f_{\alpha,\beta}$, defined after the straightening relations of $K[f_1,\ldots,f_s]$, share the initial terms with the polynomials of $G'$, hence they form a Gröbner basis for $\Ker(\pi)$ by Lemma \ref{lemma: grobner bases}. Since the defining ideal of $K[f_1,\ldots,f_s]$ has a Gröbner basis consisting of straightening relations, the standard monomials are linearly independent and $K[f_1,\ldots,f_s]$ is ASL on $P$ over $K$.
\end{proof}

\begin{example} 
\label{hibi87} 
    Let $P_n=\{(i, j) : 1 \leq i,j \leq n\}$ denote the finite poset whose partial order is defined by setting $(i,j) \leq (i',j')$ if $i \leq i'$ and $j \leq j'$.  Let $S=K[x_1, \ldots, x_{n}, y_1, \ldots, y_n]$ denote the polynomial ring in $2n$ variables over $K$ and $R_n = K[x_iy_j : 1 \leq i,j \leq n] $.  After defining the injection $\pi:P_n \to R_n$ by setting $\pi((i,j))=x_{i}y_{j}$, it follows from the discussion done in \cite[pp.~98--99]{hibi1987} that $R_n$ is an ASL on $P_n$ over $K$.
\end{example}

    Let $L = \{0_L, x_1, \ldots, x_n, y_1, \ldots, y_n, 1_L\}$ be a thin distributive lattice with $n + 1 = {\rm rank}(L)$, where $0_L < x_1 < \cdots < x_n <1_L$ and $0_L < y_1 < \cdots < y_n <1_L$ (Figure \ref{fig: thin distributive lattice}).  Note that, once $x_1$ and $y_1$ are fixed, the assignment of $x_i$ and $y_i$ for $2 \leq i \leq n$ is automatically determined.  Let $S=K[x_1, \ldots,x_n, y_1,\ldots,y_n]$ denote the polynomial ring in $2n$ variables over $K$.  Recall that we associate $x_i$ and $y_j$ which are incomparable in $L$ with the quadratic binomial 
\[
    f_{ij} = x_iy_j - (x_i \wedge y_j)(x_i\vee y_j) 
\]
    of $S$. We fix a monomial order $<$ on $S$ for which ${\rm in}_<(f_{ij}) = x_iy_j$ for all $f_{ij}$.  We introduce the subalgebra ${\mathcal R}_K(L)$ of $S$ which is generated by all binomials $f_{ij}$.  Let $Q_L$ be the sublattice of $P_n$ of Example \ref{hibi87} consisting of the pairs $(i,j)$ with $f_{ij} \in {\mathcal R}_K(L)$ (Figure \ref{fig: sublattice Q_L}).

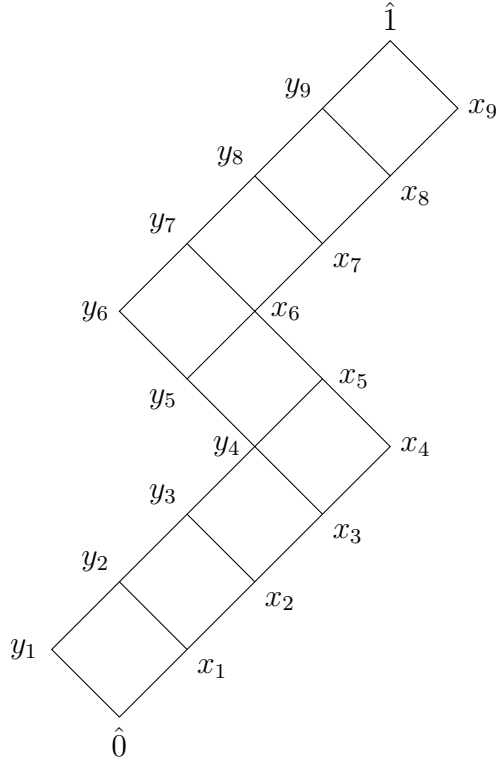
\begin{figure} 
    \centering
    \begin{tikzpicture}[scale=0.9]
    \begin{scope}[rotate around z=0]
    \coordinate (0) at (1,1) {};
    \coordinate (x1) at (2,2) {};
    \coordinate (x2) at (3,3) {};
    \coordinate (x3) at (4,4) {};
    \coordinate (x4) at (5,5) {};
    \coordinate (x5) at (4,6) {};
    \coordinate (x6) at (3,7) {};
    \coordinate (x7) at (4,8) {};
    \coordinate (x8) at (5,9) {};
    \coordinate (x9) at (6,10) {};
    \coordinate (1) at (5,11) {};
    \coordinate (y9) at (4,10) {};
    \coordinate (y8) at (3,9) {};
    \coordinate (y7) at (2,8) {};
    \coordinate (y6) at (1,7) {};
    \coordinate (y5) at (2,6) {};
    \coordinate (y4) at (3,5) {};
    \coordinate (y3) at (2,4) {};
    \coordinate (y2) at (1,3) {};
    \coordinate (y1) at (0,2) {};
    \draw(0)node[below]{\large{$\hat0$}}--
    (x1)node[below right]{\large{$x_1$}}--
    (x2)node[below right]{\large{$x_2$}}--
    (x3)node[below right]{\large{$x_3$}}--
    (x4)node[right]{\large{$x_4$}}--
    (x5)node[right=0.8mm]{\large{$x_5$}}--
    (x6)node[right=0.8mm]{\large{$x_6$}}--
    (x7)node[below right]{\large{$x_7$}}--
    (x8)node[below right]{\large{$x_8$}}--
    (x9)node[right]{\large{$x_9$}}--
    (1)node[above]{\large{$\hat1$}}--
    (y9)node[above left=0mm]{\large{$y_9$}}--
    (y8)node[above left=0mm]{\large{$y_8$}}--
    (y7)node[above left=0mm]{\large{$y_7$}}--
    (y6)node[left=0mm]{\large{$y_6$}}--
    (y5)node[below left=0mm]{\large{$y_5$}}--
    (y4)node[left=0.5mm]{\large{$y_4$}}--
    (y3)node[above left=0mm]{\large{$y_3$}}--
    (y2)node[above left=0mm]{\large{$y_2$}}--
    (y1)node[left=0.5mm]{\large{$y_1$}}--(0);
    \draw(x1)--(y2);
    \draw(x2)--(y3);
    \draw(x3)--(y4);
    \draw(x5)--(y4);
    \draw(x6)--(y5);
    \draw(x6)--(y7);
    \draw(x7)--(y8);
    \draw(x8)--(y9);
    \end{scope}
    \end{tikzpicture}
    \caption{A thin distributive lattice. } \label{fig: thin distributive lattice}
\end{figure}

\begin{figure} 
    \centering
    \begin{tikzpicture}[scale=0.9]
    \draw[step=1cm,gray,line width = 0.5](0,8) grid (8,0);
    \draw (0.1,-0.3) node{\large{$x_1$}};
    \draw (1,-0.3) node{\large{$x_2$}};
    \draw (2,-0.3) node{\large{$x_3$}};
    \draw (3,-0.3) node{\large{$x_4$}};
    \draw (4,-0.3) node{\large{$x_5$}};
    \draw (5,-0.3) node{\large{$x_6$}};
    \draw (6,-0.3) node{\large{$x_7$}};
    \draw (7,-0.3) node{\large{$x_8$}};
    \draw (8,-0.3) node{\large{$x_9$}};
    \draw (-0.3,0.1) node{\large{$y_1$}};
    \draw (-0.3,1) node{\large{$y_2$}};
    \draw (-0.3,2) node{\large{$y_3$}};
    \draw (-0.3,3) node{\large{$y_4$}};
    \draw (-0.3,4) node{\large{$y_5$}};
    \draw (-0.3,5) node{\large{$y_6$}};
    \draw (-0.3,6) node{\large{$y_7$}};
    \draw (-0.3,7) node{\large{$y_8$}};
    \draw (-0.3,8) node{\large{$y_9$}};
    \fill(0,0)circle(0.7mm);
    \fill(1,0)circle(0.7mm);
    \fill(2,0)circle(0.7mm);
    \fill(3,0)circle(0.7mm);
    \fill(1,1)circle(0.7mm);
    \fill(2,1)circle(0.7mm);
    \fill(3,1)circle(0.7mm);
    \fill(2,2)circle(0.7mm);
    \fill(3,2)circle(0.7mm);
    \fill(3,3)circle(0.8mm);
    \fill(3,4)circle(0.8mm);
    \fill(4,4)circle(0.8mm);
    \fill(3,5)circle(0.8mm);
    \fill(4,5)circle(0.8mm);
    \fill(5,5)circle(0.8mm);
    \fill(6,5)circle(0.8mm);
    \fill(7,5)circle(0.8mm);
    \fill(8,5)circle(0.8mm);
    \fill(6,6)circle(0.8mm);
    \fill(7,6)circle(0.8mm);
    \fill(8,6)circle(0.8mm);
    \fill(7,7)circle(0.8mm);
    \fill(8,7)circle(0.8mm);
    \fill(8,8)circle(0.8mm);
    \draw[line width=1.5](0,0)--(3,0)--(3,4)--(4,4)--(4,5)--(8,5)--(8,8);
    \draw[line width=1.5](1,0)--(1,1)--(3,1);
    \draw[line width=1.5](2,0)--(2,1);
    \draw[line width=1.5](2,1)--(2,2)--(3,2);
    \draw[line width=1.5](3,4)--(3,5)--(4,5);
    \draw[line width=1.5](6,5)--(6,6)--(8,6);
    \draw[line width=1.5](7,5)--(7,7)--(8,7);
    \end{tikzpicture}
    \caption{The sublattice $Q_L$ of the thin distributive lattice $L$ of Figure \ref{fig: thin distributive lattice}} \label{fig: sublattice Q_L}
\end{figure}
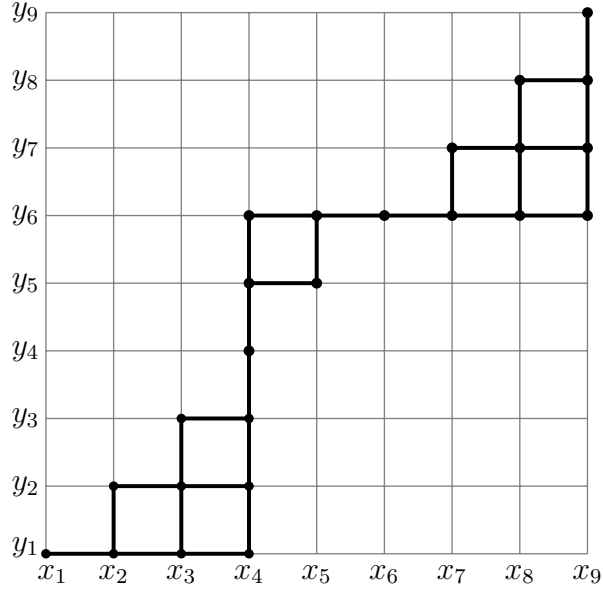

    Lemma \ref{structure} below is a simple consequence of Birkhoff's fundamental structure theorem for finite distributive lattices \cite[Theorem 9.1.7]{HHgtm260}. 

\begin{lemma}
    \label{structure}
    The partial order on $\{x_{i-1},y_{i-1},x_n,y_n\}$ is either $x_{i-1} < x_i, y_{i-1} < y_i, x_{i-1} < y_i$ or $x_{i-1} < x_i, y_{i-1} < y_i, y_{i-1} < x_i$.
\end{lemma}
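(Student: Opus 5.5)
We read the statement as concerning $\{x_{i-1},y_{i-1},x_i,y_i\}$ for $2\le i\le n$; the index $n$ is a typo for $i$. The plan is an induction on $i$ that uses only two facts about finite distributive lattices. First, via Birkhoff's theorem $L\cong\mathcal J(P_L)$, the lattice $L$ is graded, with ${\rm rank}_L(a)=|a|$ when $a$ is viewed as a poset ideal. Second, $L$ is modular:
\[
{\rm rank}_L(a\vee b)+{\rm rank}_L(a\wedge b)={\rm rank}_L(a)+{\rm rank}_L(b).
\]
Thinness says that the only elements of rank $i$ are $x_i$ and $y_i$, for $1\le i\le n$. The relations $x_{i-1}<x_i$ and $y_{i-1}<y_i$ are part of the labelling. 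So the content of the lemma is that \emph{exactly one} of $x_{i-1}<y_i$ and $y_{i-1}<x_i$ holds.

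\emph{At most one.} Suppose both hold. Then $x_i$ and $y_i$ are both upper bounds of $x_{i-1}$ and $y_{i-1}$. These two elements are distinct of the same rank, hence incomparable, so ${\rm rank}_L(x_{i-1}\vee y_{i-1})\ge i$. Since $x_{i-1}\vee y_{i-1}\le x_i$, its rank is also at most $i$, so it equals $i$. The only way $x_{i-1}\vee y_{i-1}\le x_i$ can then hold is $x_{i-1}\vee y_{i-1}=x_i$, and the same argument gives $x_{i-1}\vee y_{i-1}=y_i$. This forces $x_i=y_i$, a contradiction.

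\emph{At least one.} It suffices to show ${\rm rank}_L(x_{i-1}\vee y_{i-1})=i$. Indeed, the join is then $x_i$ or $y_i$. If it is $x_i$, we get $y_{i-1}<x_i$; if it is $y_i$, we get $x_{i-1}<y_i$. To get the rank, we prove by induction on $i$ that $x_{i-1}$ and $y_{i-1}$ cover a common element, i.e. ${\rm rank}_L(x_{i-1}\wedge y_{i-1})=i-2$. Modularity then gives ${\rm rank}_L(x_{i-1}\vee y_{i-1})=2(i-1)-(i-2)=i$.

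For $i=2$, both $x_1$ and $y_1$ cover $0_L$. For $i\ge3$, the induction hypothesis (the lemma at step $i-1$) says that one of $x_{i-2},y_{i-2}$ lies below both $x_{i-1}$ and $y_{i-1}$. That element has rank $i-2$, so $x_{i-1}\wedge y_{i-1}$, which has rank at most $i-2$, has rank exactly $i-2$, which closes the induction.

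The only point requiring care is the rank bookkeeping, namely that comparability between distinct elements of consecutive ranks is a cover relation. This follows immediately from gradedness via Birkhoff's theorem. Simplicity of $L$ is not needed for this argument.
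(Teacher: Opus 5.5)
Your proof is correct, and reading $x_n,y_n$ as $x_i,y_i$ is the right interpretation. It takes a different route from the one the paper suggests.

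The paper gives no argument beyond calling the lemma a simple consequence of Birkhoff's theorem. The natural way to make that precise uses the coordinates the paper itself introduces in the proof of Lemma~\ref{lemma: theta(L) = r}. Split $P_L$ into two chains and record each element of $L$ by the pair $(p,q)$ of sizes of its intersections with them. That element has rank $p+q$, and each rank level is a run of adjacent points on an antidiagonal. Thinness makes level $i-1$ two adjacent points $(p,q+1),(p+1,q)$. Every element of level $i$ covers one of them, so level $i$ is either $\{(p,q+2),(p+1,q+1)\}$ or $\{(p+1,q+1),(p+2,q)\}$. In either case $(p+1,q+1)$ covers both lower points, while the other upper point covers exactly one of them. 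That is the lemma, read off without induction.

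You instead take from Birkhoff only gradedness and the identity ${\rm rank}(a\vee b)+{\rm rank}(a\wedge b)={\rm rank}(a)+{\rm rank}(b)$. You then pin down ${\rm rank}_L(x_{i-1}\wedge y_{i-1})=i-2$ by induction.

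Your route costs an induction. In exchange, it needs neither the two-chain decomposition (planarity plus Dilworth) nor the contiguity of rank levels, both of which the coordinate picture has to justify. It also shows the lemma holds verbatim in any finite modular lattice with exactly two elements in each intermediate rank.
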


\begin{lemma}
    \label{chain}
    The subposet $Q_L$ of a thin distributive lattice $L$ of rank $n+1$ is a chain of the poset $P_n$ of Example \ref{hibi87} if and only if the divisor lattice $D_{2\cdot 3^3}$ is not a sublattice of $L$.
\end{lemma}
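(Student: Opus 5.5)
The plan is to turn the chain condition on $Q_L$ into a local condition on $L$, and then identify that local configuration with a copy of $D_{2\cdot 3^3}$. Throughout I will write $(i,j)\in Q_L$ iff $x_i$ and $y_j$ are incomparable in $L$.

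\emph{Step 1 (interval structure of $Q_L$).} Fix $i$. Since $y_1<\cdots<y_n$ is a chain, the $y_j$ lying below $x_i$ form an initial segment and those lying above $x_i$ form a final segment. Hence $\{j : (i,j)\in Q_L\}$ is an interval $[a_i,b_i]$. Because $x_i<x_{i+1}$, both endpoints are weakly increasing in $i$. The symmetric statement holds for columns. I would also record, using Lemma \ref{structure} and thinness, that $(i,i)\in Q_L$ for all $i$: two elements of the same rank are incomparable. So $Q_L$ is a ``staircase'' region between two monotone lattice paths.

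\emph{Step 2 (reduction to an adjacent pair).} Suppose $Q_L$ is not a chain. Then it contains an incomparable pair $(i,j')$ and $(i',j)$ with $i<i'$ and $j<j'$. Since $Q_L$ is a sublattice of $P_n$, it also contains their meet $(i,j)$ and their join $(i',j')$. By the interval property of Step 1, it then contains the whole rectangle $[i,i']\times[j,j']$. In particular it contains the unit square
\[
\{(i,j),\,(i+1,j),\,(i,j+1),\,(i+1,j+1)\}.
\]
Conversely, such a square visibly gives the incomparable pair $(i,j+1)$, $(i+1,j)$. So the lemma reduces to the claim that a unit square lies in $Q_L$ if and only if $D_{2\cdot 3^3}$ is a sublattice of $L$.

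\emph{Step 3 (square $\Leftrightarrow$ $D_{54}$).} First I compute $Q_{D_{54}}$ directly as a sanity check. Take $x$-chain $3<9<27$ and $y$-chain $2<6<18$. Then $Q_{D_{54}}=\{(1,1),(2,1),(3,1),(2,2),(3,2),(3,3)\}$, and $(3,1)$, $(2,2)$ are incomparable.

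For ``square $\Rightarrow$ $D_{54}$'', the condition says that $x_i,x_{i+1}$ are both incomparable to $y_j,y_{j+1}$. Together with $i\ne j$ (forced, since on the diagonal one of the two must be comparable by Lemma \ref{structure}), this means there is a run of consecutive ranks at which the covering edges go the same way. In the language of Birkhoff, $P_L$ contains a $3$-element chain disjoint from and incomparable to some further element. I would then exhibit the eight elements
\[
x_k\wedge y_l,\qquad x_k\vee y_l,
\]
with indices near $i,j$, and check that they are closed under $\wedge$ and $\vee$ and form the lattice $\mathcal J(\mathbf 3\sqcup\mathbf 1)\cong D_{54}$.

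For ``$D_{54}\Rightarrow$ square'', an embedded copy of $D_{54}$ has a chain of length $3$ in one ``direction'' and a length-$1$ step in the other. Transporting the incomparable pair found in $Q_{D_{54}}$ along the embedding into $L$ gives $x_k,y_l$ data producing two incomparable elements of $Q_L$. This step uses that a sublattice of a thin lattice respects the two boundary chains; I would argue this via the fact that the join-irreducibles of the sublattice map to intervals of the two chains.

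\emph{Main obstacle.} I expect the bookkeeping in Step 3 to be the hardest part. One difficulty is translating the square condition into the precise configuration of join-irreducibles, namely a chain of three elements of $P_L$ incomparable to one further element. The other is handling the converse when the embedded $D_{54}$ is not rank-preserving, i.e.\ is not a cover-preserving sublattice. For the converse I would first try to pass to join-irreducibles: a sublattice isomorphic to $\mathcal J(\mathbf 3\sqcup\mathbf 1)$ forces $P_L$ to contain the poset $\mathbf 3\sqcup\mathbf 1$ as an induced subposet, thanks to planarity (width $2$). That induced subposet directly yields the required four ranks where $x$ and $y$ stay mutually incomparable.
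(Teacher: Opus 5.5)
Your outline is correct, but it is organized differently from the paper.

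\textbf{The paper's route.} The paper argues by induction on the rank. Absence of $D_{2\cdot 3^3}$ forces the cross edges to alternate: if $x_{i-2}<y_{i-1}$ then $y_{i-1}<x_i$. So passing from the smaller lattice to $L$ only adds $(i-1,i)$ and $(i,i)$ above the top element $(i-1,i-1)$ of the chain already obtained. The other direction is handled by pointing at a diamond in $Q_L$ coming from the copy of $D_{2\cdot 3^3}$.

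\textbf{Your route.} You use the global shape of $Q_L$, a sublattice of $P_n$ whose rows and columns are intervals, to reduce ``not a chain'' to ``contains a unit square'', and then argue locally.

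\textbf{What Step 3 needs.} To turn Step 3 into a proof, state the fact you are implicitly using: for $a>b$, $x_a$ and $y_b$ are incomparable iff every cross edge between ranks $b$ and $a$ has the form $x_{k-1}<y_k$ ($b<k\le a$). This holds because an upward path from $y_b$ to $x_a$ must use some cross edge $y_{k-1}<x_k$. It follows that a unit square at $(i,j)$ with $i>j$ is the same as two consecutive cross edges of the same type. That in turn is the same as three diamonds in a row, i.e.\ an interval $D_{2\cdot 3^3}$.

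\textbf{A small correction.} For $k\in\{i,i+1\}$ and $l\in\{j,j+1\}$, the meets and joins $x_k\wedge y_l$, $x_k\vee y_l$ take only four distinct values. The eight-element copy consists of these together with $x_i,x_{i+1},y_j,y_{j+1}$, and it is an interval only when $i=j+1$.

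\textbf{Comparison.} Your route gives a transparent criterion and treats non-interval embeddings honestly. The paper's first paragraph tacitly assumes the copy of $D_{2\cdot 3^3}$ is aligned with the $x$- and $y$-chains. Your width-two argument is exactly what justifies this: a sublattice $\cong\mathcal J(\mathbf 3\sqcup\mathbf 1)$ yields an induced $\mathbf 3\sqcup\mathbf 1$ in $P_L$, which in turn yields an interval copy correctly placed on the two boundary chains. The paper's induction is shorter and exhibits the chain directly, but it leaves both the alternation claim and this alignment issue implicit.
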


\begin{proof}
    Suppose that $D_{2\cdot 3^3} = \{2^p\cdot 3^q : p\in \{0,1\}, q \in \{0,1,2,3\} \}$ is a sublattice of $L$.  Then $(2,3^2) < (2\cdot 3, 3^2) < (2\cdot 3, 3^3)$ and $(2,3^2) < (2, 3^3) < (2\cdot 3, 3^3)$.  Thus $Q_L$ cannot be a chain.

    Now, suppose that the divisor lattice $D_{2\cdot 3^3}$ is not a sublattice of $L$. Let, say, $x_{i-2} < x_{i-1}, x_{i-2} < y_{i-1}, y_{i-2} < y_{i-1}$.  Working with induction on $n$ enables us to assume that the sublattice $Q_{L'}$ with $L'=L \setminus \{x_i,y_i\}$ is a chain of $P_{n}$ whose maximal element is $(x_{i-1},y_{i-1})$.  Since $y_{i-1} < x_{i}, y_{i-1} < y_{i}, x_{i-1} < x_{i}$, it follows that $Q_L = Q_{L'} \cup \{(x_{i-1},y_i), (x_i,y_i)\}$ is a chain of $P_n$, as desired.
\end{proof}

\begin{example}
    \label{ASL_again}
    Let $L=D_{2 \cdot 3^{n-1}} = \{\hat{0}, x_1, \ldots, x_{n-1}, y_1, \ldots, y_{n-1}, \hat{1}\}$ with $2 = y_1, 3= x_1$ as above.  If $\hat{0} = x_0$ and $\hat{1} = y_n$.  Let $(\ell,i)$ and $(k,j)$ are incomparable in $Q_L$, say, $i < j \leq k < \ell$, then the classical Pl\"ucker relation yields
    \begin{eqnarray*}
    &&(y_ix_{\ell} - y_{\ell+1} x_{i-1})(y_jx_{k} - y_{k+1} x_{j-1})  \\
    & = &(y_ix_{k} - y_{k} x_{i-1})(y_jx_{\ell} - y_{\ell} x_{j-1})-(y_ix_{j -1} - y_{j} x_{i-1})(y_{k+1}x_{\ell} - y_{\ell+1} x_{k}),    
    \end{eqnarray*}
    In other words, we have a quadratic relation among the binomial generators of $\mathcal{R}_K(L)$:
    \[
    f_{\ell,i}f_{k,j} = f_{k,i}f_{\ell,j} - f_{j-1,i}f_{\ell,k+1}.
    \]
    Since $(k,i)<(\ell,i), (j-1,i)<(\ell,i), (k,i)<(k,j),(j-1,i)<(k,j)$, it follows that ${\mathcal R}_K(L)$ is a weakly ASL on $Q_L$ over $K$.
\end{example}

\begin{theorem}
    \label{finally}
    The join-meet algebra ${\mathcal R}_K(L)$ of a thin distributive lattice $L$ is a weakly ASL on $Q_n$ over $K$.    
\end{theorem}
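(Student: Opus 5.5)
The plan is to reduce every incomparable pair of $Q_L$ to the situation of Example~\ref{ASL_again}, where the straightening relation is the Pl\"ucker relation. (I read $Q_n$ in the statement as $Q_L$, the sublattice of $P_n$ attached to $L$.)

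\textbf{Setting up.} Take two incomparable elements $(k,j)$ and $(\ell,i)$ of $Q_L$; after relabelling, $i<j$ and $k<\ell$, so that $x_k$ is incomparable to $y_j$ and $x_\ell$ is incomparable to $y_i$. We must write $f_{\ell,i}f_{k,j}$ as a linear combination of standard monomials $f_\alpha f_\beta$ with $\alpha\le\beta$ in $Q_L$ and $\alpha\le (k,j),(\ell,i)$. As in Example~\ref{ASL_again}, the relation should be
\[
f_{\ell,i}f_{k,j}=f_{k,i}f_{\ell,j}-f_{j-1,i}f_{\ell,k+1},
\]
with a suitable reading of the indices at the boundary.

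\textbf{Step 1: the relevant interval is a divisor lattice.} Since $Q_L$ is a sublattice of $P_n$, the meet $(k,i)$ and the join $(\ell,j)$ lie in $Q_L$. Hence $x_k$ is incomparable to $y_i$ and $x_\ell$ is incomparable to $y_j$. I claim that then $x_{k'}$ and $y_{j'}$ are incomparable for all $k\le k'\le \ell$ and $i\le j'\le j$. Indeed, by Lemma~\ref{structure}, comparabilities between the two chains $x_\bullet$ and $y_\bullet$ are generated by covering relations of the form $x_{m-1}<y_m$ or $y_{m-1}<x_m$, and comparability propagates monotonically along the chains. The incomparabilities at the corner pairs then force the incomparability of every pair in the rectangle.

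Consequently, in the index window from $\min(i,k)-1$ to $\max(j,\ell)+1$, the covering relations between the chains all have the same orientation. This is exactly the configuration of $D_{2\cdot 3^m}$ up to the swap $x\leftrightarrow y$. Concretely, in this window the meets and joins are
\[
x_{a}\wedge y_{b}=x_{b-1}, \qquad x_{a}\vee y_{b}=y_{a+1}
\]
(or the symmetric formulas with $x$ and $y$ exchanged), with the conventions $x_0=\hat 0$ and $y_{n+1}=\hat 1$ near the ends. I expect this localization to be the main obstacle. The argument must be done carefully by induction along the chains using Lemma~\ref{structure}, essentially repeating the proof of Lemma~\ref{chain}, and the boundary cases where the window reaches $\hat0$ or $\hat1$ need separate checking.

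\textbf{Step 2: the Pl\"ucker identity.} With the meets and joins identified, each generator in the window has the form $f_{a,b}=y_bx_a-y_{a+1}x_{b-1}$. This is a $2\times2$ minor of the matrix with columns $(x_{m-1},y_m)$. The displayed identity of Example~\ref{ASL_again} is then the three-term Pl\"ucker relation among the four columns $i,j,k+1,\ell+1$, and it holds verbatim in $S$. It remains to check that the terms on the right-hand side are indexed by elements of $Q_L$.
\begin{itemize}
\item The pairs $(k,i)$ and $(\ell,j)$ lie in $Q_L$ by Step~1.
\item If $j-1\ge i$, then $(j-1,i)$ and $(\ell,k+1)$ lie in the rectangle of Step~1, so they are in $Q_L$. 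Otherwise the corresponding binomial is identically zero, because it is a minor with a repeated column, and the term simply drops out.
\end{itemize}

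\textbf{Step 3: the ASL conditions.} We need $(k,i)\le(\ell,j)$, and $(j-1,i)\le(\ell,k+1)$ when this term is present. We also need the smallest factor of each monomial to lie below both $(k,j)$ and $(\ell,i)$; this holds for $(k,i)$ and for $(j-1,i)$ because $j-1<j\le k$. The two standard monomials are distinct, and their coefficients are the nonzero constants $\pm1$. This gives (ASL\,-2) for every incomparable pair, so ${\mathcal R}_K(L)$ is a weakly ASL on $Q_L$, with no assumption on $K$.
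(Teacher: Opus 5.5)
\textbf{Gap in Step 1.} Your localization claim is false as stated, and with it the meet and join formulas at the two outermost columns. Take the case $i<j\le k<\ell$; the other case is symmetric. The incomparability of $x_\ell$ and $y_i$ forces the cross covers $x_{m-1}<y_m$ only for $i<m\le\ell$: a cover $y_{m-1}<x_m$ there would give $y_i<x_\ell$. The cross cover between ranks $i-1$ and $i$, and the one between ranks $\ell$ and $\ell+1$, may have the opposite orientation.

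Concretely, in the lattice of Figure~\ref{fig: thin distributive lattice} the pairs $(9,6)$ and $(7,7)$ are incomparable in $Q_L$, and your window runs from $5$ to $10$. But $y_5<x_6$ while $x_5\not<y_6$, so $x_9\wedge y_6=y_5$, not $x_5$. Hence $f_{9,6}$, $f_{7,6}$ and $f_{6,6}$ are not the minors you write down. Dually, for the pairs $(4,2)$ and $(3,3)$ one has $y_4<x_5$, so $x_4\vee y_2=x_5$, not $y_5$. Your conventions $x_0=\hat0$, $y_{n+1}=\hat1$ handle only the case where the window reaches the ends of $L$. They do not cover an interior change of orientation, and that is precisely the situation the paper's proof is organized around.

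\textbf{Repair.} The fix keeps your strategy. Put $m=x_\ell\wedge y_i$ and $M=x_\ell\vee y_i$. The interval $[m,M]$ of $L$ consists of $m$, $M$, and $x_r,y_r$ for $i\le r\le\ell$, and it is isomorphic to $D_{2\cdot 3^{\ell-i+1}}$. It contains all six pairs in your relation together with their meets and joins, and these are the same in an interval as in $L$. So Example~\ref{ASL_again} applies to $[m,M]$, with $m$ and $M$ playing the roles of $x_{i-1}$ and $y_{\ell+1}$.

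Equivalently, replace the columns $(x_{i-1},y_i)$ and $(x_\ell,y_{\ell+1})$ by $(m,y_i)$ and $(x_\ell,M)$. The three-term Pl\"ucker identity holds for any four columns, so your relation and Step~3 are unaffected. This is exactly what the paper's intervals $L_{(-)}=[\hat0,x_{i_0}]$ and $L_{(+)}=[x_{i_0-2},\hat1]$ achieve: an orientation switch becomes the top or bottom of an interval.

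\textbf{A smaller slip in Step 2.} The pairs $(j-1,i)$ and $(\ell,k+1)$ do not lie in your rectangle $[k,\ell]\times[i,j]$, since $j-1<k$ and $k+1>j$. They lie in $Q_L$ because $i\le j-1$ and $k+1\le\ell$ inside $[m,M]$. Also, your \emph{otherwise} case never occurs, because $i<j$.

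\textbf{Comparison with the paper.} With these corrections, your pair-by-pair argument is a valid alternative to the paper's route. You avoid induction and the apex analysis. The paper's decomposition, in exchange, gives the global shape of $Q_L$ as divisor-lattice blocks glued at apexes.
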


\begin{proof}
    Assume $x_{1} < x_{2}, x_{1} < y_{2}, y_{1} < y_{2}$ and let $i_0$ be the smallest integer $i$ for which
    $x_{i-2} < x_{i-1}, x_{i-2} < y_{i-1}, y_{i-2} < y_{i-1}$
    and
    $y_{i-1} < x_{i}, y_{i-1} < y_{i}, x_{i-1} < x_{i}$.  Let $L_{(-)}$ denote the interval $[\hat{0}, x_{i_0}]$ of $L$ and $L_{(+)}$ the interval $[x_{i_{0}-2},\hat{1}]$ of $L$.  Then $(x_{i_0-1},y_{i_0-1})$ is the unique maximal element of  $Q_{L_{(-)}}$ as well as the unique minimal element of $Q_{L_{(+)}}$.  Furthermore, each $\alpha \in Q_{L_{(-)}}$ and each $\beta \in Q_{L_{(+)}}$ are comparable in $L$.  Thus $(x_{i_0-1},y_{i_0-1})$ is an apex of $Q_L$.  

    Now, Example \ref{ASL_again} says that ${\mathcal R}_K(L_{(-)})$ is a weakly ASL on $Q_{L_{(-)}}$ over $K$ and, in addition, working with induction on $n$, it follows that ${\mathcal R}_K(L_{(+)})$ is a weakly ASL on $Q_{L_{(+)}}$ over $K$.  Hence ${\mathcal R}_K(L)$ is a weakly ASL on $Q_L$ over $K$, as required. 
\end{proof}

\section{Relations of join-meet binomials}
    Let $L$ be a finite distributive lattice on $n$ elements $x_1, \ldots, x_n$ and $S=K[x_1, \ldots, x_n]$ be the polynomial ring in $n$ variables over a field $K$.  Recall that the join-meet binomials of $L$ are
\[ 
    f_{ij}= x_ix_j - (x_i\vee x_j)(x_i\wedge x_j), \quad 1\leq i<j\leq n.
\] 
    Let ${\mathcal R}_K(L)$ be the join-meet algebra 
\[ \mathcal{R}_k(L)=k[f_{ij}:   1\leq i<j\leq n, f_{ij} \neq 0] \subset S \]
    of $L$.  Let $A=K[y_{i,j} : 1\leq i<j\leq n, f_{ij} \neq 0]$ denote the polynomial ring over $K$ and define the surjective ring homomorphism $\pi : A \to {\mathcal R}_K(L)$ by setting $\pi(y_{ij})=f_{ij}$.  The defining ideal $I_{{\mathcal R}_K(L)}$ of ${\mathcal R}_K(L)$ is the kernel of $\pi$. 
    It is natural to ask for which distributive lattices $L$ the join-meet algebra ${\mathcal R}_K(L)$ is isomorphic to a polynomial ring, that is, the join-meet binomials are algebraically independent.  

\begin{example} \label{ex: D22}
    The divisor lattice $D_{2^2\cdot 3^2}$ (Figure \ref{fig: divisor lattice 2^2 3^2}) has $9$ binomial generators. These are the $2$-minors of a $3\times 3$ matrix.  It follows from \cite[Theorem 6.4.7 (c)]{bruns2022determinants} that the join-meet algebra $\mathcal R_K(D_{2,2})$ is a polynomial ring.

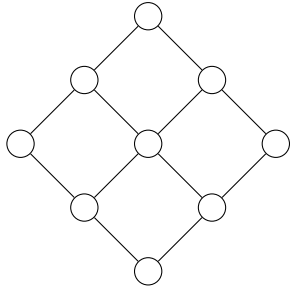
\begin{figure} 
    \centering
    \begin{tikzpicture}[scale=1.2]
    \begin{scope}[rotate around z=45]
    \node[draw,shape=circle] (1) at (0,0) {};
    \node[draw,shape=circle] (2) at (1,0) {};
    \node[draw,shape=circle] (3) at (2,0) {};
    \node[draw,shape=circle] (4) at (0,1) {};
    \node[draw,shape=circle] (5) at (1,1) {};
    \node[draw,shape=circle] (6) at (2,1) {};
    \node[draw,shape=circle] (7) at (0,2) {};
    \node[draw,shape=circle] (8) at (1,2) {};
    \node[draw,shape=circle] (9) at (2,2) {};
    \end{scope}
    \draw(1)--(2)--(3)--(6)--(9)--(8)--(7)--(4)--(1);
    \draw(2)--(5)--(8);
    \draw(4)--(5)--(6);
    \end{tikzpicture}
    \caption{Divisor lattice $D_{2^2\cdot 3^2}$.} \label{fig: divisor lattice 2^2 3^2}
\end{figure}
\end{example}

\begin{theorem} \label{polynomial_ring}   
    Let $L$ be a distributive lattice.  Then ${\mathcal R}_K(L)$ is a polynomial ring if and only if $L$ is planar and the divisor lattice $D_{2\cdot3^3}$ is not a sublattice of $L$.
\end{theorem}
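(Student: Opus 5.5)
We prove both implications, using Lemma \ref{lemma: grobner bases} and its Remark for sufficiency, and explicit quadratic relations for necessity.

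\emph{Necessity.} First suppose $L$ is not planar, so there are three pairwise incomparable join-irreducible elements $a,b,c$. The sublattice generated by them contains a copy of the Boolean lattice $B_3$. Both join and meet are computed inside the sublattice, so the join-meet binomials of $B_3$ occur among those of $L$. The two quadrics for $\mathcal R_K(B_3)$ displayed in the Introduction are therefore nontrivial algebraic relations among generators of $\mathcal R_K(L)$. The point to check is that a nonzero polynomial in distinct variables $y_{ij}$ stays nonzero after identification, which holds because distinct pairs of $B_3$ give distinct pairs of $L$.

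Now suppose $D_{2\cdot 3^3}$ is a sublattice of $L$. Example \ref{ASL_again} with $n=4$ supplies a Plücker-type quadratic relation $f_{\ell,i}f_{k,j}=f_{k,i}f_{\ell,j}-f_{j-1,i}f_{\ell,k+1}$ among its generators. These are the incomparable pairs found in the proof of Lemma \ref{chain}. This is a nonzero relation among distinct generators, so again $\mathcal R_K(L)$ is not a polynomial ring. Since sublattice binomials are binomials of $L$, both cases give necessity.

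\emph{Sufficiency.} Let $L$ be planar without $D_{2\cdot3^3}$. Planar means $P_L$ has width at most $2$, so $L$ is a sublattice of $\mathbb N^2$ bounded by two lattice paths. We decompose $L$ at its apexes. If $a$ is an apex, every generator of $\mathcal R_K(L)$ involves only elements of $[0_L,a]$ or only elements of $[a,1_L]$, since an incomparable pair cannot straddle $a$. These two sets of variables meet only in $a$. The aim is to show that $\mathcal R_K(L)$ is the tensor product of the algebras of the two intervals, which reduces us to simple planar lattices.

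For a simple planar $L$, choose a monomial order on $S$ with $\mathrm{in}_<(f_{ij})=x_ix_j$, the product of the incomparable pair. This is possible by taking a reverse lexicographic order compatible with rank. By the Remark after Lemma \ref{lemma: grobner bases}, it suffices to show that the monomials $x_ix_j$ over incomparable pairs are algebraically independent. Equivalently, their exponent vectors must be linearly independent.

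Here planarity and the absence of $D_{2\cdot3^3}$ enter. A width-two lattice avoiding $D_{2\cdot 3^3}$ contains no $3\times 4$ grid, and for $D_{2^2\cdot 3^2}$ Example \ref{ex: D22} already handles the $3\times3$ grid. In the thin case, Lemma \ref{chain} says $Q_L$ is a chain. Then the pairs $(x_i,y_j)$ form a chain in $P_n$, and each new pair introduces a new variable, so the monomials are triangular and hence independent.

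The main obstacle is the non-thin simple planar case, where rank levels of size $3$ or more occur, as in $D_{2^2\cdot3^2}$. There I would try an induction that peels off a corner element $m$ (maximal join-irreducible, or $1_L$). The incomparable pairs involving the removed element would have initial monomials containing a fresh variable, which makes them independent of the rest. The configurations where this fails should be exactly those containing a $D_{2^2\cdot 3^2}$ block. Those blocks I would handle directly by the determinantal independence of Example \ref{ex: D22}.
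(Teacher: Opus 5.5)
\textbf{The non-thin case is not proved.} Your necessity argument and your thin case match the paper: a $B_3$ sublattice when $L$ is nonplanar, the Pl\"ucker relation of Example~\ref{ASL_again} when $D_{2\cdot 3^3}\subseteq L$, and triangularity of the initial monomials $x_iy_j$ along the chain $Q_L$ of Lemma~\ref{chain}. The non-thin case, however, you leave as a speculative peeling induction.

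The missing idea is that, for \emph{simple} planar $L$, there is nothing to induct on. The paper just observes that such an $L$, non-thin and without $D_{2\cdot 3^3}$, \emph{is} $D_{2^2\cdot 3^2}$. Note that $D_{2\cdot 3^3}$ is the $2\times 4$ grid, not a $3\times 4$ grid as you wrote, so excluding it is very restrictive:
\begin{itemize}
\item If $\theta(L)\ge 4$, Lemma~\ref{lemma: theta(L) = r} gives a $4\times 4$ grid interval, which contains a $2\times 4$ grid.
\item If $\theta(L)=3$, the lemma gives a $3\times 3$ grid interval $[a_*,a^*]$. In its coordinates $L\subseteq\mathbb{Z}^2$, and rows and columns of $L$ are intervals. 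Suppose $(p,q)\in L$ is incomparable to $a_*=(p_*,q_*)$, say $p<p_*$ and $q>q_*$. Taking meets and joins with grid elements forces $(p_*-1,q_*)$ and $(p_*-1,q_*+1)$ into $L$. This gives a $4\times 2$ rectangle, i.e.\ a copy of $D_{2\cdot 3^3}$.
\end{itemize}
Hence $a_*$, and likewise $a^*$, is an apex, so $L=D_{2^2\cdot 3^2}$ and Example~\ref{ex: D22} finishes. Your initial-monomial method could not have handled this lattice anyway. The nine monomials $x_\alpha x_\beta$ over incomparable pairs of the $3\times 3$ grid involve only the seven non-corner variables, so they are algebraically dependent. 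The grid must be treated as a whole by the determinantal result.

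\textbf{The apex reduction is false, and the statement fails without simplicity.} Your claimed isomorphism $\mathcal{R}_K(L)\cong\mathcal{R}_K([0_L,a])\otimes_K\mathcal{R}_K([a,1_L])$ does not hold, because both factors can use the variable $x_a$. Glue two copies of $D_{2^2\cdot 3^2}$ at an apex $a$, identifying the top of one with the bottom of the other.
\begin{itemize}
\item This $L$ has $17$ elements.
\item It is planar: its join-irreducibles form the ordinal sum of two copies of a disjoint union of two $2$-element chains.
\item It contains no $D_{2\cdot 3^3}$. The bottom and top of such a copy are the meet and join of an incomparable pair, and that pair must lie on one side of $a$. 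So the copy would sit inside one $3\times 3$ grid, contradicting Example~\ref{ex: D22} together with the necessity direction.
\item Yet $L$ has $9+9=18$ join-meet binomials in a polynomial ring in $17$ variables, so they are algebraically dependent.
\end{itemize}
So the statement tacitly needs $L$ simple. The paper's proof makes the same tacit assumption: its step ``not thin $\Rightarrow L=D_{2^2\cdot 3^2}$'' already fails for a $3$-element chain. Gluing at an apex $a$ is only safe when one side's generators remain algebraically independent over $K(x_a)$. A thin side qualifies, since its initial monomials avoid $x_a$; two adjacent $3\times 3$ grids do not. Restrict to simple $L$, where the argument in the previous paragraph closes the proof.
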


\begin{proof}
    If $L$ is non-planar, then the boolean lattice $B_3$ of rank $3$ is a sublattice of $L$.  As was seen in Introduction, ${\mathcal R}_K(B_3)$ cannot be a polynomial ring.  Thus ${\mathcal R}_K(L)$ cannot be a polynomial ring.  Let $L$ be planar.  If the divisor lattice $D_{2\cdot3^3}$ is a sublattice of $L$, then a Pl\"ucker relation appears in the defining ideal $I_{{\mathcal R}_K(L)}$ of ${\mathcal R}_K(L)$, as shown in Example \ref{ASL_again}.  

    Suppose that $L$ is planar and that $D_{2\cdot3^3}$ is not a sublattice of $L$.  If $L$ is thin, then ${\mathcal R}_K(L)$ is a polynomial ring (Lemma \ref{chain}).  If $L$ is not thin and $D_{2\cdot3^3}$ is not a sublattice of $L$, then $L=D_{2^2\cdot 3^2}$ and the desired result is Example \ref{ex: D22}.
\end{proof}

    As mentioned in Introduction, a final goal of our project is to classify those distributive lattices $L$ for which the defining ideal $I_{{\mathcal R}_K(L)}$ of ${\mathcal R}_K(L)$ is generated by quadrics. As first step toward this purpose, we show that the defining ideal $I_{{\mathcal R}_K(L)}$ of a thin distributive lattice $L$ is generated by quadrics.  Recall that a finite simple distributive lattice $L$ is thin if $\theta(L)=2$.

\begin{theorem} \label{highlight}
    Suppose that a finite simple  distributive lattice $L$ is thin.  Then the defining ideal $I_{{\mathcal R}_K(L)}$ of ${\mathcal R}_K(L)$ is generated by quadrics and has a quadratic Gr\"obner basis.  In particular, ${\mathcal R}_K(L)$ is a Koszul algebra. Furthermore, ${\mathcal R}_K(L)$ is Gorenstein.
\end{theorem}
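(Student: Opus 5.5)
The plan is to realize ${\mathcal R}_K(L)$ as an ASL on the distributive lattice $Q_L$, and then read off all four conclusions from standard ASL theory.

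\emph{Step 1: ASL structure.} By Theorem~\ref{finally}, ${\mathcal R}_K(L)$ is a weakly ASL on $Q_L$ over $K$. Fix the monomial order $<$ on $S$ with ${\rm in}_<(f_{ij}) = x_iy_j$. Then $K[{\rm in}_<(f_{ij}) : (i,j) \in Q_L]$ is the subalgebra $K[x_iy_j : (i,j)\in Q_L]$ of the algebra $R_n$ of Example~\ref{hibi87}. I claim this is an ASL on $Q_L$. Indeed, $Q_L$ is a sublattice of $P_n$. For $(i,j),(k,l)$ incomparable in $Q_L$, the straightening relation in $R_n$ is the monomial identity
\[
x_iy_j \cdot x_ky_l = x_{\min(i,k)}y_{\min(j,l)} \cdot x_{\max(i,k)}y_{\max(j,l)},
\]
and both factors on the right lie in $Q_L$. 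Standard monomials on $Q_L$ are standard monomials on $P_n$, so they are linearly independent. They also span, because repeated straightening stays inside $Q_L$. Lemma~\ref{weakly} then shows that ${\mathcal R}_K(L)$ is an ASL on $Q_L$ over $K$.

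\emph{Step 2: quadratic Gr\"obner basis and Koszulness.} By the discussion following the definition of an ASL, the straightening polynomials $f_{\alpha,\beta}$, for $\alpha,\beta$ incomparable in $Q_L$, form a Gr\"obner basis of $I_{{\mathcal R}_K(L)}$ with respect to $<_{\rm rev}$. Their initial terms are $y_\alpha y_\beta$, so the Gr\"obner basis is quadratic. In particular, the ideal is generated by quadrics. A quadratic Gr\"obner basis implies that ${\mathcal R}_K(L)$ is Koszul.

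\emph{Step 3: Gorenstein property.} I expect this to be the main obstacle. The standard route is the theorem that an ASL on a poset $Q$ degenerates to the Stanley--Reisner ring of the order complex of $Q$, and Cohen--Macaulayness and Gorensteinness of $K[\Delta(Q)]$ transfer back along this degeneration. Here $Q_L$ is a finite distributive lattice, so $\Delta(Q_L)$ is shellable and ${\mathcal R}_K(L)$ is Cohen--Macaulay. The Gorenstein property is subtler, because it does not simply pass through flat degeneration in the non-domain case.

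Instead, I would use that ${\mathcal R}_K(L)$ is a domain (it is a subring of $S$) and compare Hilbert series. The ASL property gives ${\mathcal R}_K(L)$ the same Hilbert function as the Hibi ring $K[x_iy_j : (i,j)\in Q_L]$, which is also a domain. By Hibi's criterion, that Hibi ring is Gorenstein if and only if the poset $P_{Q_L}$ of join-irreducibles of $Q_L$ is pure. By Stanley's theorem, a Cohen--Macaulay domain is Gorenstein if and only if its $h$-vector is symmetric. So it suffices to show that $P_{Q_L}$ is pure.

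The decomposition in the proof of Theorem~\ref{finally} expresses $Q_L$ as a gluing, along apexes, of pieces of the form $Q_{D_{2\cdot 3^m}}$. Each such piece is a staircase sublattice $\{(k,i): i\le k\}$, which is isomorphic to the lattice of order ideals of a product of two chains truncated to a triangle; equivalently, it is the Grassmannian ${\rm Gr}(2,m)$ ASL, whose poset is known to be Gorenstein. Gluing at apexes corresponds to an ordinal sum of the join-irreducible posets, and ordinal sums preserve purity. So $P_{Q_L}$ is pure, and ${\mathcal R}_K(L)$ is Gorenstein. The most delicate point is verifying that each block's join-irreducible poset is pure, in particular at the boundary blocks where the gluing happens.
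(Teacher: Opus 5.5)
Your proposal is correct and follows the paper's route. Theorem~\ref{finally} together with Lemma~\ref{weakly} gives the ASL structure on $Q_L$, for which you rightly check that $K[x_iy_j : (i,j)\in Q_L]$ is an ASL on the sublattice $Q_L$. That yields a quadratic Gr\"obner basis and Koszulness. Gorensteinness then comes from Hibi's purity criterion, which you transfer to ${\mathcal R}_K(L)$ via equal Hilbert series and Stanley's theorem for Cohen--Macaulay domains.

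Your Step~3 supplies what the paper only asserts: it says ``$Q_n$ is pure'', whereas the relevant condition is purity of the poset of join-irreducibles of $Q_L$. Your apex decomposition makes that poset an ordinal sum of products of chains. For a block $\{(k,i) : 1\le i\le k\le m\}$ the join-irreducibles are $(k,1)$ and $(k,k)$ with $2\le k\le m$, forming $C_2\times C_{m-1}$. So the ``delicate point'' you flag is routine.
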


\begin{proof}
    Lemma \ref{weakly} together with Theorem \ref{finally} guarantees that the defining ideal $I_{{\mathcal R}_K(L)}$ of ${\mathcal R}_K(L)$ is generated by quadrics that form a Gr\"obner basis.  In particular, by a result of Fröberg, ${\mathcal R}_K(L)$ is Koszul \cite{Froberg1999Koszul}. Furthermore, since the poset $Q_n$ on which ${\mathcal R}_K(L)$ is ASL is pure, it follows from the Gorenstein criterion that ${\mathcal R}_K(L)$ is Gorenstein \cite[p.~105]{hibi1987}.
\end{proof}

\begin{example} \label{theta(L)=3}
    Let $L$ and $L'$ be the nonplanar distributive lattices of Figure \ref{fig: Nonplanar distributive lattice} and Figure \ref{fig: planar distributive lattice theta =3}, respectively. They both satisfy $\theta(L) = \theta(L') = 3$. However, the defining ideal $I_{{\mathcal R}_K(L)}$ of ${\mathcal R}_K(L)$ is generated by quadrics, but the defining ideal $I_{{\mathcal R}_K(L')}$ of ${\mathcal R}_K(L')$ cannot be generated by quadrics.

\begin{figure}[ht]
    \centering
    \begin{tikzpicture}[scale=0.8]
    \node[draw,shape=circle] (1) at (1,-1) {};
    \node[draw,shape=circle] (2) at (0,0) {};
    \node[draw,shape=circle] (3) at (2,0) {};
    \node[draw,shape=circle] (4) at (1,1) {};
    \node[draw,shape=circle] (5) at (0,1) {};
    \node[draw,shape=circle] (6) at (1,0) {};
    \node[draw,shape=circle] (7) at (2,1) {};
    \node[draw,shape=circle] (8) at (1,2) {};
    \node[draw,shape=circle] (9) at (2,-2){};
    \node[draw,shape=circle] (10) at (3,-1) {};
    \node[draw,shape=circle] (11) at (2,3) {};
    \node[draw,shape=circle] (12) at (3,2) {};
    
    \draw (1)--(2)--(5)--(8)--(7)--(3)--(1);
    \draw (8)--(4)--(2);
    \draw (4)--(3);
    \draw (1)--(6)--(5);
    \draw (6)--(7);
    \draw (1)--(9)--(10)--(3);
    \draw (8)--(11)--(12)--(7);
    \end{tikzpicture}
    \caption{Nonplanar distributive lattice with $\theta(L)=3$} \label{fig: Nonplanar distributive lattice}
\end{figure}
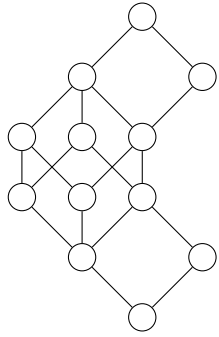

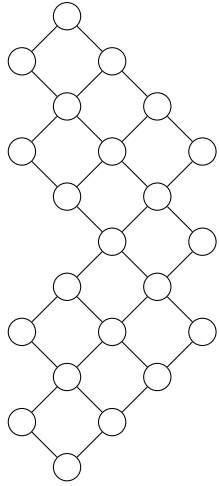
\begin{figure}[ht]
    \centering
    \begin{tikzpicture}[scale=0.6]
    \begin{scope}[rotate around z=0]
    \node[draw,shape=circle] (1) at (0,0) {};
    \node[draw,shape=circle] (2) at (-1,1) {};
    \node[draw,shape=circle] (3) at (1,1) {};
    \node[draw,shape=circle] (4) at (0,2) {};
    \node[draw,shape=circle] (5) at (2,2) {};
    \node[draw,shape=circle] (6) at (-1,3) {};
    \node[draw,shape=circle] (7) at (1,3) {};
    \node[draw,shape=circle] (8) at (3,3) {};
    \node[draw,shape=circle] (9) at (0,4){};
    \node[draw,shape=circle] (10) at (2,4) {};
    \node[draw,shape=circle] (11) at (1,5) {};
    \node[draw,shape=circle] (12) at (3,5) {};
    \node[draw,shape=circle] (13) at (0,6) {};
    \node[draw,shape=circle] (14) at (2,6) {};
    \node[draw,shape=circle] (15) at (-1,7) {};
    \node[draw,shape=circle] (16) at (1,7) {};
    \node[draw,shape=circle] (17) at (3,7) {};
    \node[draw,shape=circle] (18) at (0,8) {};
    \node[draw,shape=circle] (19) at (2,8) {};
    \node[draw,shape=circle] (20) at (-1,9) {};
    \node[draw,shape=circle] (21) at (1,9) {};
    \node[draw,shape=circle] (22) at (0,10) {};
    \end{scope}
    \draw (1)--(3)--(5)--(8)--(10)--(7)--(4)--(2)--(1);
    \draw (3)--(4);
    \draw (5)--(7);
    \draw (4)--(6)--(9)--(11)--(14)--(12)--(10);
    \draw (9)--(7);
    \draw (11)--(10);
    \draw (11)--(13)--(15)--(18)--(20)--(22)--(21)--(19)--(17)--(14);
    \draw (13)--(16)--(19);
    \draw (14)--(16)--(18);
    \draw (18)--(21);
    \end{tikzpicture}
    \caption{Planar distributive lattice with $\theta(L')=3$} \label{fig: planar distributive lattice theta =3}
\end{figure}

\end{example}

\begin{lemma} \label{lemma: theta(L) = r}
    Let $L$ be a finite simple planar distributive lattice and suppose that $\theta(L)  = r $.  Then the divisor lattice $D_{2^{r-1}\cdot 3^{r-1}}$ is an interval of $L$. 
 \end{lemma}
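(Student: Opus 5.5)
The plan is to realize $L$ inside a product of two chains, read the $r$ elements of a largest rank level as a contiguous antidiagonal there, and show that the whole $r\times r$ box they span lies in $L$.

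\textbf{Coordinates.} Since $L$ is planar, any three join-irreducible elements contain a comparable pair, so the poset $P_L$ has width at most $2$. By Dilworth, $P_L = C_1 \cup C_2$ for two disjoint chains $c_1<\cdots<c_p$ and $d_1<\cdots<d_q$. By Birkhoff, $L\cong\mathcal J(P_L)$. A poset ideal $I$ meets each chain in an initial segment, so it is determined by the pair $(a,b)=(|I\cap C_1|,|I\cap C_2|)$. This embeds $L$ into the grid $[0,p]\times[0,q]$ with the componentwise order. Meets and joins of ideals are intersections and unions, so the embedding sends $\wedge$ and $\vee$ to componentwise $\min$ and $\max$. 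Thus $L$ is a sublattice of the grid, and the rank of $(a,b)$ is $a+b$.

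\textbf{Locating the level and the box.} Choose $1\le i<d$ with $\rho_L(i)=r$. The rank-$i$ elements are $r$ distinct points $(a,i-a)$ with $a\in A$, where $|A|=r$. I will show that $A$ is an interval $[a_0,a_0+r-1]$. Suppose $a<a'<a''$ with $a,a''\in A$. Then the meet $(a,i-a'')$ and the join $(a'',i-a)$ both lie in $L$. Any saturated chain in $L$ from the meet to the join passes through rank $i$. The point it hits at rank $i$ need not have first coordinate $a'$, so this does not yet settle contiguity. I will instead obtain contiguity as a byproduct of the next step.

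Let $a_0=\min A$ and $a_1=\max A$, and set $m=(a_0,\,i-a_1)$ and $M=(a_1,\,i-a_0)$; both lie in $L$. The key claim is that every grid point $w=(a,b)$ with $m\le w\le M$ lies in $L$. If $a+b\le i$, put $p=(a,i-a)$ and $q=(i-b,b)$. Then $w=p\wedge q$ in the grid, and it suffices that $p,q\in L$. If $a+b\ge i$, the symmetric argument writes $w$ as a join. Applying the claim with $a+b=i$ shows that every $a\in[a_0,a_1]$ lies in $A$, so $A$ is an interval. Since $|A|=r$, this gives $a_1=a_0+r-1$.

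\textbf{The main obstacle.} The step I expect to be hardest is justifying that $p$ and $q$ belong to $L$. I would not rely on the meet/join trick for this, since it is circular here. The argument should instead use the ideal constraints directly. An ideal condition ``$d_k\in I\Rightarrow c_m\in I$'' translates to the implication $b\ge k\Rightarrow a\ge m$, and symmetrically for the other chain. Hence $L$ is exactly the set of grid points satisfying monotone constraints of the form $b\le h(a)$ and $a\le h'(b)$, with $h$ and $h'$ nondecreasing. Now fix $w$ with $m\le w\le M$, and test $w$ against these constraints using the two points $(a_0,i-a_0)\in L$ and $(a_1,i-a_1)\in L$, which are extreme in $a$ and extreme in $b$ respectively.
\begin{itemize}
\item For the constraint $b\le h(a)$: since $b\le i-a_0$ and $a\ge a_0$, monotonicity of $h$ gives $b\le i-a_0\le h(a_0)\le h(a)$.
\item For the constraint $a\le h'(b)$: use $a\le a_1$ and $b\ge i-a_1$, so $a\le a_1\le h'(i-a_1)\le h'(b)$.
\end{itemize}
So every point of the box $[a_0,a_1]\times[i-a_1,i-a_0]$ satisfies all the defining constraints and lies in $L$. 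This argument replaces the meet/join step entirely and also yields the contiguity of $A$.

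\textbf{Conclusion.} The interval $[m,M]$ of $L$ is therefore the full grid box $[a_0,a_1]\times[i-a_1,i-a_0]$ with the componentwise order. Its two sides have $a_1-a_0+1=r$ points each. This box is isomorphic to $D_{2^{r-1}\cdot3^{r-1}}$, which proves the lemma.
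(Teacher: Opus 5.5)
Your proof is correct and takes essentially the same route as the paper. Both use Dilworth and Birkhoff to get grid coordinates, then use the fact that the two extreme rank-$i$ elements are poset ideals to show that the whole box they span lies in $L$, from which the contiguity of the level and $[m,M]\cong D_{2^{r-1}\cdot 3^{r-1}}$ follow. Your monotone-constraint check is exactly the paper's observation that the elements of $C$ strictly between $x_{p_1}$ and $x_{p_r}$ are incomparable with those of $C'$ strictly between $y_{q_r}$ and $y_{q_1}$. The abandoned meet/join detour in your second paragraph can simply be deleted; also note that you reuse $p,q$ both for the chain lengths and for grid points.
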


\begin{proof}
    Let $P_L$ be the poset of all join-irreducible elements of $L$.  One has $L={\mathcal J}(P_L)$.  Since $L$ is planar, it follows from Dilworth's theorem  \cite[Chapter 3, Exercise 77]{EC1} that $P_L$ can be decomposed into the disjoint union of chains $C$ and $C'$ of $P_L$.  Let $C:x_0 < \cdots < x_t$ and $C':y_0 < \cdots < y_s$.  If $a \in L$ is equal to $I \in {\mathcal J}(P_L)$ and if $x_p$ (resp. $y_q$) is the maximal element of $I \cap C$ (resp. $I \cap C'$), then we employ the notation $a=(x_p,y_q)$, where $p=-1$ if $I \cap C = \emptyset$.  Now, suppose that $\rho_L(i) = r \geq 2$.  Let $a_1,  \ldots, a_{r}$ denote the elements belonging to $\{ a \in L : {\rm rank}_L(a)= i \}$.  Let $a_j = (x_{p_j},y_{q_j}), 1 \leq j \leq r$, where $p_j + q_j = i - 2$.  Let $p_1  < \cdots < p_r$ and $q_1 > \cdots > q_r$.  Since $a_1 = (x_{p_1},y_{q_1})$ and $a_r = (x_{p_r},y_{q_r})$ are poset ideals of $P_L$, it follows that each $x \in C$ with $x_{p_1} < x \leq x_{p_r}$ and each $y \in C'$ with $y_{q_r} < y \leq y_{q_1}$ are incomparable in $P_L$.  In particular, both $p_1,  \ldots, p_r$ and $q_r, \ldots,  q_1$ are successive integers.  Furthermore, 
\[
    I_*=\{x_0, x_1, \ldots,x_{p_1}, y_0, y_1, \ldots, y_{q_r}\}, \quad I^*= \{x_0, x_1, \ldots,x_{p_r}, y_0, y_1, \ldots, y_{q_1}\} 
\]
    are poset ideals of $P_L$. Let $a_*=(x_{p_1}, y_{q_r}) \in L$ and $a^*=(x_{p_r}, y_{q_1}) \in L$. It then follows that the interval $[a_*,a^*]$ of $L$ is the divisor lattice $D_{2^{r-1}\cdot 3^{r-1}}$.
\end{proof}

\begin{lemma} \label{lemma: theta(L) = 3}
    Let $L$ be a finite simple planar distributive lattice and suppose that there is $i_0$ with $\rho_L(i_0)=\rho_L(i_0+1)=3$.  Then the divisor lattice $D_{2^{2}\cdot 3^{3}}$ is an interval of $L$.
\end{lemma}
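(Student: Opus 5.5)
We follow the setup in the proof of Lemma \ref{lemma: theta(L) = r}. Since $L$ is planar, Dilworth's theorem decomposes $P_L$ into two disjoint chains $C: x_0<\cdots<x_t$ and $C': y_0<\cdots<y_s$. We write each $a\in L={\mathcal J}(P_L)$ as $a=(x_p,y_q)$, where $x_p$ and $y_q$ are the largest elements of $a$ in $C$ and in $C'$, respectively. The rank of $a$ is then $p+q+2$.

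We first list the elements of rank $i_0$ and rank $i_0+1$. Let the elements of rank $i_0$ be $(x_{p_j},y_{q_j})$ for $j=1,2,3$. As shown in the proof of Lemma \ref{lemma: theta(L) = r}, the indices $p_1<p_2<p_3$ are consecutive, say $p,p+1,p+2$, and correspondingly $q_j=q+3-j$. Likewise the elements of rank $i_0+1$ are $(x_{p'},y_{q'+2}),(x_{p'+1},y_{q'+1}),(x_{p'+2},y_{q'})$ with $p'+q'=p+q+1$.

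Next we relate the two levels using covering relations. Each element of rank $i_0$ is covered by some element of rank $i_0+1$, obtained by increasing exactly one of its two coordinates by one. Each element of rank $i_0+1$ covers some element of rank $i_0$, obtained by decreasing exactly one coordinate by one. Together these constraints force $p'\in\{p,p+1\}$. The two cases are symmetric, since swapping the roles of $C$ and $C'$ exchanges them. So we assume $p'=p$, $q'=q+1$, and the rank $i_0+1$ elements are $(x_p,y_{q+3}),(x_{p+1},y_{q+2}),(x_{p+2},y_{q+1})$. This step needs the most care: we must exclude other index offsets and check that both extreme elements of each level appear, for instance that $(x_p,y_{q+3})$ really lies at rank $i_0+1$. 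This follows from the count being exactly $3$ on each level together with the consecutiveness argument.

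Now we read off the incomparabilities in $P_L$. Since $(x_{p+2},y_q)$ is a poset ideal, $x_{p+1}$ and $x_{p+2}$ are not above $y_{q+1},\dots,y_{q+3}$. Since $(x_p,y_{q+3})$ is a poset ideal, $y_{q+1},y_{q+2},y_{q+3}$ are not above $x_{p+1}$ or $x_{p+2}$. Hence $\{x_{p+1},x_{p+2}\}$ and $\{y_{q+1},y_{q+2},y_{q+3}\}$ are mutually incomparable. In addition, $I_*=\{x_0,\dots,x_p,y_0,\dots,y_q\}$ is a poset ideal, because it is the intersection of the ideals $(x_{p+2},y_q)$ and $(x_p,y_{q+3})$.

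Finally we identify the interval. Set $a_*=(x_p,y_q)$ and $a^*=(x_{p+2},y_{q+3})$; here $a^*$ is the join of $(x_{p+2},y_{q+1})$ and $(x_p,y_{q+3})$, so it is a poset ideal. Every element of the interval $[a_*,a^*]$ has the form $I_*\cup\{x_{p+1},\dots,x_{p+u}\}\cup\{y_{q+1},\dots,y_{q+v}\}$ with $0\le u\le 2$ and $0\le v\le 3$. By the mutual incomparability established above, every such set is a poset ideal. Therefore $[a_*,a^*]\cong \{0,1,2\}\times\{0,1,2,3\}$, which is the divisor lattice $D_{2^2\cdot 3^3}$.
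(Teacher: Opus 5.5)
Your proof is correct and follows essentially the same route as the paper: you identify the three elements of ranks $i_0$ and $i_0+1$ as consecutive anti-diagonal pairs, reduce by the $C\leftrightarrow C'$ symmetry to one of the two possible offsets, and read off the $3\times 4$ grid interval from the mutual incomparability of the relevant chain segments. The differences are minor: you locate the rank-$(i_0+1)$ level through covering relations where the paper uses the joins $a\vee b$ and $b\vee c$, and you spell out the check that every set in the grid is a poset ideal, which the paper leaves implicit.
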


\begin{proof}
    We follow the proof of Lemma \ref{lemma: theta(L) = r}.  Let $a = (x_{p-1}, y_{q+1}), b=(x_{p},y_{q}), c=(x_{p+1}, y_{q-1})$ be the elements of $L$ with ${\rm rank}_L(a) = {\rm rank}_L(b) = {\rm rank}_L(c) = i_0$.  Then $a' = (x_{p}, y_{q+1}), c'=(x_{p+1}, y_{q})$ are elements of $L$ with ${\rm rank}_L(a') = {\rm rank}_L(c') = i_0+1$.  Since $\rho_L(i_0+1)=3$, it follows that either $a''=(x_{p+2}, y_{q-1})$ or $c''=(x_{p-1}, y_{q+2})$ must belong $L$.  Say, $a''=(x_{p+2}, y_{q-1}) \in L$.  Hence $x_p < x_{p+1} < x_{p+2}$ and $y_{q} < y_{q+1}$ are the disjoint union of chains.  Thus the interval $[f, g]$ of $L$, where $f=(x_{p-1},y_{q-1})$ and $g=(x_{p+2}, y_{q+1})$, is the divisor lattice $D_{2^{2}\cdot 3^{3}}$. 
\end{proof}

\begin{lemma}
    \label{algebra_retract}
    Let $L$ be a finite distributive lattice and let $L'$ be an interval of $L$. If the defining ideal $I_{{\mathcal R}_K(L)}$ of ${\mathcal R}_K(L)$ is generated by quadrics, then $I_{{\mathcal R}_K(L')}$ is generated by quadrics.   
\end{lemma}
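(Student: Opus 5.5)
The strategy is to show that ${\mathcal R}_K(L')$ is an \emph{algebra retract} of ${\mathcal R}_K(L)$ compatible with the presentations. Then a minimal generating set of $I_{{\mathcal R}_K(L)}$ maps onto a generating set of $I_{{\mathcal R}_K(L')}$ of no larger degrees. Write $L'=[a,b]$.

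\textbf{Step 1: a monomial retraction of $S$.} Define a $K$-algebra map $\sigma: S\to S'=K[x : x\in L']$ by
\[
\sigma(x)=x \ \text{ if } x\in L', \qquad \sigma(x)=0 \ \text{ if } x\notin L'.
\]
Consider a join-meet binomial $f_{x,y}=xy-(x\vee y)(x\wedge y)$ with $x,y$ incomparable. If $x,y\in L'$, then $x\vee y,\,x\wedge y\in L'$ because an interval is a sublattice, so $\sigma(f_{x,y})=f_{x,y}$. If, say, $x\notin L'$, then $xy\mapsto 0$. It remains to see that $(x\vee y)(x\wedge y)\mapsto 0$. Suppose instead that both $x\vee y$ and $x\wedge y$ lie in $[a,b]$. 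Then $a\le x\wedge y\le x\le x\vee y\le b$, so $x\in L'$, a contradiction. Hence $\sigma(f_{x,y})=0$. Therefore $\sigma$ restricts to a surjection ${\mathcal R}_K(L)\to{\mathcal R}_K(L')$ that is the identity on ${\mathcal R}_K(L')\subset{\mathcal R}_K(L)$. This interval check is the one genuinely lattice-theoretic point, and it is where the hypothesis that $L'$ is an interval rather than an arbitrary sublattice is used.

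\textbf{Step 2: lift to the presentations.} Let $A=K[y_{x,y}]$ and $A'=K[y_{x,y} : x,y\in L']\subset A$, with $\pi,\pi'$ as in the paper. Define $\tau:A\to A'$ by sending $y_{x,y}\mapsto y_{x,y}$ if $x,y\in L'$, and $y_{x,y}\mapsto 0$ otherwise. By Step 1 we have $\pi'\circ\tau=\sigma\circ\pi$, and $\tau$ restricted to $A'$ is the identity. Let $g\in I_{{\mathcal R}_K(L')}$. Then $g\in A'\subset A$ and $\pi(g)=\pi'(g)=0$, so $g\in I_{{\mathcal R}_K(L)}$. Write $g=\sum_k c_k q_k$ with $q_k$ the quadric generators of $I_{{\mathcal R}_K(L)}$ and $c_k\in A$. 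Applying $\tau$ gives
\[
g=\tau(g)=\sum_k \tau(c_k)\,\tau(q_k).
\]
Each $\tau(q_k)$ lies in $A'$, is zero or a homogeneous quadric, and satisfies $\pi'(\tau(q_k))=\sigma(\pi(q_k))=0$, so $\tau(q_k)\in I_{{\mathcal R}_K(L')}$. Thus the nonzero elements among the $\tau(q_k)$ are quadrics generating $I_{{\mathcal R}_K(L')}$.

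\textbf{Main obstacle.} The only point needing care is the vanishing in Step 1: both monomials of $f_{x,y}$ must vanish simultaneously under $\sigma$ when $x$ or $y$ lies outside the interval. Once that holds, the remainder is the standard retract argument.
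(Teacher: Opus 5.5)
Your proof is correct and follows the paper's route: the same retraction $S\to S'$ killing the variables outside $L'$, with the same interval check showing that each $f_{x,y}$ is either fixed or sent to $0$. The only difference is the last step. The paper cites a comparison of graded Betti numbers for algebra retracts, while your Step~2 proves the needed statement directly by lifting the retraction to a map $\tau: A\to A'$ that sends variables to variables or $0$; this is a valid and more self-contained substitute for the citation.
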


\begin{proof}
    We first recall that, since $L'$ is an interval of $L$, it follows that $\alpha, \beta \in L$ belong to $L'$ if and only if $\alpha\wedge\beta$ and $\alpha\vee\beta$ belong to $L'$.  We claim ${\mathcal R}_K(L') \subset {\mathcal R}_K(L)$ is an algebra retract \cite[p.~747]{algebra_retract}.  Let $S=K[x_i : x_i \in L]$ and $S'=K[x_j : x_j \in L']$ be polynomial rings over $K$.  
    We consider the natural epimorphism $p : S \to S'$, fixing $x_i$ if it belongs to $L'$ and mapping $x_i$ to $0$ otherwise. If $\alpha,\beta$ are in $L'$, then $p(f_{ij})=f_{ij}$, otherwise, one among $x_{\alpha\vee \beta}$ and $x_{\alpha\wedge \beta}$ is not in $L'$ and $p(f_{ij})=0$. Thus the image of ${\mathcal R}_K(L)$ under $p$ is ${\mathcal R}_K(L')$ and the restriction of $p$ to ${\mathcal R}_K(L')$ is the identity.
    It follows that $\varepsilon = p_{|{\mathcal R}_K(L')}$ is a retraction map for ${\mathcal R}_K(L') \subset {\mathcal R}_K(L)$. The desired result follows from comparing the graded Betti numbers of the defining ideals of ${\mathcal R}_K(L')$ and $ {\mathcal R}_K(L)$ \cite[Corollary 2.5]{algebra_retract}.
\end{proof}

\begin{theorem} \label{theta(L)>3}
    Let $L$ be a finite simple planar distributive lattice and suppose that one of the following conditions is satisfied:
 \begin{itemize}
     \item[(i)] $\theta(L) > 3$;
     \item[(ii)] $\theta(L)=3$ and there is $i_0$ with $\rho_L(i_0)=\rho_L(i_0+1)=3$.
 \end{itemize}
    Then the defining ideal $I_{{\mathcal R}_K(L)}$ of ${\mathcal R}_K(L)$ cannot be generated by quadrics.
 \end{theorem}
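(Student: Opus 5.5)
The plan is to reduce both cases to a single small lattice. We use Lemma \ref{algebra_retract}, which says that the property ``defining ideal generated by quadrics'' passes from $L$ to its intervals; contrapositively, it suffices to find one interval $L'$ of $L$ with $I_{{\mathcal R}_K(L')}$ not generated by quadrics. In case (i), $\theta(L)=r\geq 4$, so Lemma \ref{lemma: theta(L) = r} shows that $D_{2^{r-1}\cdot 3^{r-1}}$ is an interval of $L$. This interval contains $D_{2^{2}\cdot 3^{3}}$ (indeed $D_{2^3\cdot 3^3}$) as an interval. In case (ii), Lemma \ref{lemma: theta(L) = 3} directly gives $D_{2^2\cdot 3^3}$ as an interval of $L$. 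An interval of an interval is an interval, so in both cases everything reduces to a single claim: \emph{the defining ideal of ${\mathcal R}_K(D_{2^2\cdot 3^3})$ is not generated by quadrics.}

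To prove the claim, I would identify ${\mathcal R}_K(D_{2^2\cdot3^3})$ concretely. Take $D_{2^2\cdot 3^3}$ as the $3\times 4$ grid with variables $x_{ab}$, $0\le a\le 2$, $0\le b\le 3$. The join-meet binomials are exactly the $2$-minors of the $3\times 4$ generic matrix $X=(x_{ab})$. There are $\binom32\binom42=18$ of them, and they are all nonzero. So ${\mathcal R}_K(L')$ is the $K$-algebra generated by all $2$-minors of a generic $3\times 4$ matrix, in the spirit of Example \ref{ex: D22}, whose $3\times 3$ case gives a polynomial ring. Its dimension is known from \cite[Chapter~10]{bruns2022determinants}: the algebra of $t$-minors of an $m\times n$ generic matrix with $t<\min(m,n)$ has dimension $mn$, so here the dimension is $12$. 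With $18$ generators, the defining ideal $I$ has height $6$. Its generators are among the relations of the $2$-minors, and there are quadratic Plücker-type relations: for each fixed pair of rows, the $2$-minors satisfy the Plücker relation of ${\rm Gr}(2,4)$. This gives three quadrics, $p_{12}p_{34}-p_{13}p_{24}+p_{14}p_{23}$, one for each row pair.

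The key step is to show that $I$ needs a generator of degree $\ge 3$. I would do this by a Hilbert function comparison in degree $3$. First compute $\dim_K I_2$ and the span $A_1 I_2$ inside $A_3$. Then compute $\dim_K I_3 = \binom{20}{3}-\dim_K {\mathcal R}_K(L')_3$, where the degree-$3$ component is the space of degree-$6$ polynomials in $x_{ab}$ spanned by products of three $2$-minors. This component can be counted with the representation-theoretic decomposition of products of minors, via Bruns–Conca's description of the algebra of minors as spanned by standard bitableaux with shape conditions. If $\dim_K I_3>\dim_K(A_1I_2)$, a cubic generator exists. The natural candidate is a cubic relation coming from the $3$-minors. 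For each choice of three columns, expanding the $3\times 3$ minor $\Delta$ along rows gives identities, and the degree-$6$ element $\Delta^2$ is expressible through products of three $2$-minors in two different ways. Their difference, for example the Cayley-type identity $\det(\mathrm{adj}\,M)=\Delta^2$ set against a Laplace expansion, yields a cubic relation not in the ideal generated by the quadrics.

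The main obstacle is certifying that this cubic is not in the ideal generated by the degree-$2$ part of $I$. I would first try to settle it via the Hilbert series, using the known multigraded decomposition of $K[2\text{-minors}]$ (at least in characteristic $0$), and then pass to arbitrary $K$. For that passage, either use a Gröbner/Sagbi degeneration that is characteristic-free (the initial algebra is a toric ring whose ideal one checks directly has a cubic minimal generator), or use a small explicit computation. Alternatively, one could exhibit a further sublattice retract reducing to an even smaller case. If neither argument is clean, I would fall back on a direct computer-algebra verification for $D_{2^2\cdot3^3}$ over $\mathbb{Z}$, which is finite and would settle the base case.
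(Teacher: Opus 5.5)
Your reduction is exactly the paper's. Lemmas \ref{lemma: theta(L) = r} and \ref{lemma: theta(L) = 3} give an interval $D_{2^2\cdot 3^3}$, Lemma \ref{algebra_retract} passes quadratic generation to intervals, and ${\mathcal R}_K(D_{2^2\cdot3^3})$ is the algebra of $2$-minors of a generic $3\times 4$ matrix.

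\textbf{The gap.} What remains is the claim that this algebra's defining ideal $I$ is not generated by quadrics, and that is the whole content beyond the lemmas. The paper simply quotes \cite{BRUNS2013171} for it. You set out to prove it but stop at a menu of strategies with a computer fallback, so nothing is actually established.

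\textbf{Your explicit cubic does not work.} For a fixed set of three columns, the nine $2$-minors are algebraically independent (Example \ref{ex: D22}). So $\Delta^2$ has exactly one expression as a polynomial in them, namely $\det(\mathrm{adj}\,M)$. A Laplace expansion writes $\Delta$ in terms of matrix entries, not $2$-minors, so ``two expressions'' of this kind give no relation. Any cubic relation must involve all four columns.

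\textbf{Quadric count.} You also undercount the quadrics: $\dim_K I_2=6$, not $3$. Besides the three Plücker relations, there are their three polarizations mixing two different row pairs.

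\textbf{How to close the gap.} Either cite \cite{BRUNS2013171} as the paper does, or actually carry out your degree-$3$ comparison, say in characteristic $0$, using the Bruns--Conca shape description. Let $A$ be the polynomial ring in the $18$ variables.
\begin{itemize}
\item In degree $2$, the allowed shapes $(2,2)$ and $(3,1)$ give $6\cdot 20+3\cdot 15=165$ standard bitableaux. Hence $\dim_K I_2=\binom{19}{2}-165=6$ and $\dim_K A_1I_2\le 18\cdot 6=108$.
\item In degree $3$, the allowed shapes are $(3,3)$, $(3,2,1)$, $(2,2,2)$. They give $1\cdot 10+8\cdot 64+10\cdot 50=1022$, so $\dim_K I_3=\binom{20}{3}-1022=118>108$.
\end{itemize}
Hence at least ten cubic minimal generators are needed.

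\textbf{A concrete witness.} Let $c_0,\dots,c_3\in K^3$ be the columns. Let $N$ be the $3\times 3$ matrix with columns $c_0\times c_1$, $c_0\times c_2$, $c_0\times c_3$; its entries are $\pm$ the $2$-minors on columns $\{0,k\}$.
\begin{itemize}
\item Then $c_0^{T}N=0$, since $\det(c_0,c_0,c_k)=0$. So $\det N$, read as a cubic in the $18$ variables, is a nonzero element of $I_3$.
\item Under the row action of $\mathrm{GL}_3$, $\det N$ transforms by $\det^2$, so it is $\mathrm{SL}_3$-invariant.
\item As an $\mathrm{SL}_3$-module, $A_1\otimes I_2$ is a sum of copies of $\wedge^2K^3\otimes S_{(2,2)}K^3=S_{(3,3)}K^3\oplus S_{(3,2,1)}K^3$, which has no invariants.
\end{itemize}
So in characteristic $0$, $\det N\notin (I_2)$.
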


\begin{proof}
    Lemmas \ref{lemma: theta(L) = 3} and \ref{lemma: theta(L) = r} guarantee that an interval of $L$ is the divisor lattice $D_{2^{2}\cdot 3^{3}}$.

    In \cite{BRUNS2013171} the authors proved that the ideal $I_{\mathcal R_K(D_{2^2 \cdot 3^3})}$ has minimally cubic relations, which, together with the Plücker quadrics, form a system of generators 
\cite{2x2minors}.

    Then it follows from Lemma \ref{algebra_retract} that $I_{{\mathcal R}_K(L)}$ cannot be generated by quadrics. 
\end{proof}

    We close the present paper with the following conjecture. 

\begin{conjecture}
    Let $L$ be a simple planar distributive lattice.  Then the defining ideal $I_{{\mathcal R}_K(L)}$ of ${\mathcal R}_K(L)$ is generated by quadrics if and only if at most one interval is $D_{2^{2}\cdot 3^{2}}$.
\end{conjecture}

\section*{Appendix}
    We give a short proof of the classical result that the homogeneous coordinate ring of the Grassmannian ${\rm Gr}_K(d,n)$ is an ASL on a distributive lattice over $K$.

    Let $2 \leq d \leq n$ be integers.  Let $L$ denote the set of those symbols $[i_1\cdots i_d]$ with $1\leq i_1 < \cdots < i_d \leq n$.  We introduce the partial order on $L$ by setting $[i_1\cdots i_d] \leq [i'_1\cdots i'_d]$ if $i_1 \leq i'_1, \ldots, i_d \leq i'_d$.  It then follows that $L$ is a distributive lattice and $[i_1\cdots i_d] \in L$ is join-irreducible if and only if $[i_1 \cdots i_d]$ is one of the following:
\begin{itemize}
    \item $[a,a+1,\ldots, a + (d-1)], 1 <  a \leq n - d + 1$;  
    \item $[1, 2, \ldots, a, a + b, a + b + 1, \cdots, b + d - 1], 1 < a < d, 1 < b \leq n - d + 1$. 
\end{itemize}

    Let $X=\{x_{ij}\}_{1 \leq i \leq d, 1 \leq j \leq n}$ denote the $d \times n$ matrix of variables and $[i_1,\ldots,i_d]$, where $1\leq i_1 < \cdots < i_d \leq n$, the determinant of the $d \times d$ submatrix of $X$ consisting of columns $i_1, \ldots, i_d$.  Let $S= K[x_{ij} : 1 \leq i \leq d, 1 \leq j \leq n]$ denote the polynomial ring in $dn$ variables over a field $K$.  Let $A$ denote the subring of $S$ generated by those determinants $[i_1,\ldots,i_d]$ with $1\leq i_1 < \cdots < i_d \leq n$.  Now, considering $L$ to be a subset of $A$ in an obvious way. Pl\"ucker relations guarantee that $A$ is a weakly ASL on $L$ over $K$. Since there is a monomial order $<$ on $S$ for which ${\rm in}_{<}([i_1,\ldots,i_d]) = x_{1,i_1} x_{2,i_2} \cdots x_{d,i_d}$ \cite[p.~80]{bruns2022determinants}, by virtue of Lemma \ref{weakly}, in order to show that $A$ is an ASL on $L$ over $K$, it suffices to prove the following Lemma.

\begin{lemma} \label{toric}
    Define the injection $\psi:L \to S$ by setting $\psi([i_1,\ldots, i_d]) = \prod_{j=1}^{d} x_{j,i_j}$. Then the toric ring $K[\psi(\xi) : \xi \in L]$ is an ASL on $L$ over $K$. 
\end{lemma}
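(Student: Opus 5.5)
The plan is to recognize $K[\psi(\xi) : \xi \in L]$ as the Hibi ring of the distributive lattice $L$, and then verify (ASL-1) and (ASL-2) directly.

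\textbf{Step 1: the straightening relations.} Let $\xi=[i_1\cdots i_d]$ and $\eta=[i'_1\cdots i'_d]$ be incomparable. Since the order on $L$ is componentwise, one has
\[
\xi\wedge\eta=[\min(i_1,i'_1)\cdots\min(i_d,i'_d)],\qquad \xi\vee\eta=[\max(i_1,i'_1)\cdots\max(i_d,i'_d)].
\]
Both are strictly increasing sequences, hence elements of $L$. For each row $j$ the multiset $\{i_j,i'_j\}$ equals $\{\min(i_j,i'_j),\max(i_j,i'_j)\}$. Therefore
\[
\psi(\xi)\psi(\eta)=\prod_j x_{j,i_j}x_{j,i'_j}=\psi(\xi\wedge\eta)\psi(\xi\vee\eta).
\]
The right-hand side is a standard monomial, and $\xi\wedge\eta\le\xi,\eta$. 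So (ASL-2) holds, provided we know that this expression is the unique expansion guaranteed by (ASL-1). That uniqueness comes from Step 2.

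\textbf{Step 2: standard monomials form a basis.} First, spanning. Repeatedly replacing an incomparable pair $\psi(\xi)\psi(\eta)$ by $\psi(\xi\wedge\eta)\psi(\xi\vee\eta)$ terminates. To see this, note that the multiset of positions per row is preserved, while a quantity such as $\sum(\text{rank})^2$ strictly increases under each replacement. Hence every monomial in the $\psi(\xi)$ equals a standard monomial.

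Second, linear independence, which is the main point. Distinct standard monomials are pure monomials in $S$, so it suffices to show that distinct chains give distinct products in $S$. Take a multichain $\xi^{(1)}\le\cdots\le\xi^{(m)}$ in $L$ and its image $u=\prod_k\psi(\xi^{(k)})$. For each row $j$, the monomial $u$ records the multiset $M_j=\{i^{(1)}_j,\dots,i^{(m)}_j\}$. Because the chain is componentwise increasing, $i^{(k)}_j$ is the $k$-th smallest element of $M_j$. Thus the chain can be recovered from $u$: sort each $M_j$ and read off the $k$-th entries. So distinct standard monomials map to distinct monomials of $S$, and distinct monomials of $S$ are linearly independent over $K$. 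This gives (ASL-1).

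\textbf{Remaining issue.} The one subtle point is that $\psi$ must be injective and that the reading-off in Step 2 recovers an element of $L$. Both are immediate: $\psi(\xi)$ determines $i_j$ as the unique column index appearing in row $j$, and the recovered $k$-th entries satisfy $i^{(k)}_1<\cdots<i^{(k)}_d$ because they come from an actual chain.

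Combining Steps 1 and 2 with Lemma \ref{weakly} and the Plücker weak-ASL property stated earlier, we conclude that $A$ is an ASL on $L$ over $K$.
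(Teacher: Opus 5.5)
Your proof is correct, but it takes a different route from the paper.

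\textbf{The paper's route.} The paper reduces to Hibi rings. It decomposes the join-irreducibles of $L$ into $d$ chains and names them $x_{j,j+q}$. It then observes that $\varphi(\xi)=\prod_j x_{j,j}x_{j,j+1}\cdots x_{j,i_j}$ equals $\hat0$ times the product of the join-irreducibles below $\xi$. So $K[\varphi(\xi):\xi\in L]$ is the Hibi ring of $L$, and the ASL property is quoted from \cite{hibi1987}. Finally it passes to $\psi$. Strictly speaking, the two toric rings are isomorphic rather than equal inside $S$, via the unimodular change of exponents $e_{j,k}\mapsto e_{j,j}+\cdots+e_{j,k}$, which is compatible with the maps from $L$.

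\textbf{Your route.} You work with $\psi$ directly. Since $L$ is a sublattice of $\mathbb{Z}^d$ with componentwise $\min$ and $\max$, the straightening relation is the single binomial $\psi(\xi)\psi(\eta)=\psi(\xi\wedge\eta)\psi(\xi\vee\eta)$. (ASL-1) then reduces to the fact that a multichain can be read off from the row multisets of its monomial.

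\textbf{Comparison.} Your approach is self-contained and in effect reproves Hibi's theorem in this case. It needs neither the description of the join-irreducibles nor the comparison of two toric rings, and it works verbatim for any sublattice of a product of chains. The paper's approach instead places the ring inside the Hibi-ring framework, where further properties such as normality and Cohen--Macaulayness are already known.

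\textbf{Two minor points.}
\begin{enumerate}
\item Your first sentence announces an identification with the Hibi ring that the argument never uses.
\item Your termination measure should be made explicit with a modular rank, e.g.\ $\mathrm{rank}[i_1\cdots i_d]=\sum_j (i_j-j)$. Then $\mathrm{rank}(\xi\wedge\eta)+\mathrm{rank}(\xi\vee\eta)=\mathrm{rank}\,\xi+\mathrm{rank}\,\eta$, together with $\xi\wedge\eta<\xi,\eta<\xi\vee\eta$, gives the strict increase of $\sum\mathrm{rank}^2$.
\end{enumerate}
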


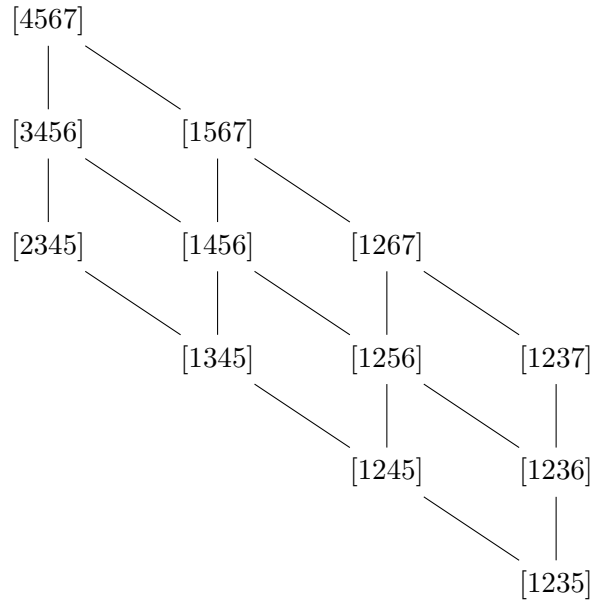
\begin{figure}
\centering
\begin{tikzpicture}[scale=1.5]
\node (x0) at (0,0) {$[4567]$};
\node (x1) at (0,-1) {$[3456]$};
\node (x2) at (0,-2) {$[2345]$};
\node (x3) at (1.5,-1) {$[1567]$};
\node (x4) at (1.5,-2) {$[1456]$};
\node (x5) at (1.5,-3) {$[1345]$};
\node (x6) at (3,-2) {$[1267]$};
\node (x7) at (3,-3) {$[1256]$};
\node (x8) at (3,-4) {$[1245]$};
\node (x9) at (4.5,-3) {$[1237]$};
\node (x10) at (4.5,-4) {$[1236]$};
\node (x11) at (4.5,-5) {$[1235]$};
\draw(x0)--(x1)--(x2)--(x5)--(x8)--(x11)--(x10)--(x9)--(x6)--(x3)--(x0);
\draw(x3)--(x4)--(x5);
\draw(x6)--(x7)--(x8);
\draw(x1)--(x4)--(x7)--(x10);
\end{tikzpicture}
\caption{The poset $P$ with $d=4$ and $n=7$}
\label{d=4_n=7.tex}
\end{figure}

\begin{proof}
Let $P$ denote the set of join-irreducible elements of $L$.  Then $P$ can be decomposed into the disjoint union of chains $P_1, P_2, \ldots, P_{d}$, where 
\begin{eqnarray*}
 P_1 &=& \{[2,3,\ldots,d+1],[3,4,\ldots, d+2],\ldots,[n-d+1, n-d+2, \ldots, n]\}, \\
 P_2&=& \{[1,3,4,\ldots,d+1], [1,4,5,\ldots,d+2], \ldots, [1, n-d+2,n-d+3, \ldots, n ]\},\\
 P_3&=& \{[1,2,4,5,\ldots,d+2], [1,2,5,6,\ldots,d+3], \ldots, [1,2, n-d+3, \ldots, n]\},\\
 & \vdots & \\
 P_{d}&=&\{[1,2,3,\ldots,d-1, d+1], [1,2,3,\ldots,d-1, d+2], \ldots, [1,2,3,\ldots,d-1, n]\}.
\end{eqnarray*}
We introduce the symbols $x_{ij}, 1\leq i \leq d, 1 \leq j \leq n$ and express each $P_i$ in the form   
\[ 
P_i : x_{i, i+1} < x_{i,i+2} < \cdots < x_{i,n-d+i}. 
\]
    For example, in Figure \ref{d=4_n=7.tex}, one has $[1345] = x_{2,3}$ and $[1256] = x_{3,5}$.  Furthermore, we add a unique minimal element $\hat{0} = x_{1,1}x_{2,2} \cdots x_{d,d}$ to $P$.  
    We define the injection $\varphi:L \to S$ by setting $\varphi([i_1,\ldots, i_d]) = f_1f_2 \cdots f_d,$ where
\begin{eqnarray*}
    f_j = x_{j,j} x_{j,j+1} x_{j,j+2} \cdots x_{j,i_j}.
\end{eqnarray*}
    Since $x_{j,j+q} = [1,2,\ldots, j-1, j+q, j+q+1, \ldots]\leq [i_1,\ldots, i_d]$ if and only if $j+q \leq i_j$, it follows that the toric ring $K[\varphi(p) : p \in L]$ coincides with $\mathcal R_K(L)$ introduced in \cite{hibi1987}.  In particular, $K[\varphi(\xi)] : \xi \in L]$ is an ASL on $L$ over $K$.  On the other hand, defining the injection $\psi:L \to S$ by setting $\psi([i_1,\ldots, i_d]) = \prod_{j=1}^{d} x_{j,i_j}$, one clearly has $K[\varphi(\xi) : \xi \in L] = K[\psi(\xi) : \xi \in L]$.
\end{proof}

\printbibliography

\medskip
\noindent{\bf Authors' addresses:}
\medskip

\end{document}